\documentclass[11pt, reqno]{amsart}
\usepackage{amsfonts,latexsym,enumerate}
\usepackage{amsmath}
\usepackage{amscd}
\usepackage{float,amsmath,amssymb,mathrsfs,bm,multirow,graphics}
\usepackage[dvips]{graphicx}
\usepackage[percent]{overpic}
\usepackage[pdftex]{color}
\usepackage{amsaddr}
\usepackage[numbers,sort&compress]{natbib}
\usepackage{todonotes}
\usepackage{tikz}
\usetikzlibrary{decorations.markings}

\addtolength{\topmargin}{-10ex}
\addtolength{\oddsidemargin}{-3em}
\addtolength{\evensidemargin}{-3em}
\addtolength{\textheight}{15ex}
\addtolength{\textwidth}{5.5em}

\newcommand{\nequation}{\setcounter{equation}{0}}
\renewcommand{\theequation}{\mbox{\arabic{section}.\arabic{equation}}}
\newcommand{\R}{{\Bbb R}}

\newcommand{\C}{{\Bbb C}}

\newcommand{\proofend}{\hfill$\Box$\bigskip}
\newcommand{\proofendcontinue}{\hfill \raisebox{.8mm}[0cm][0cm]{$\bigtriangledown$}\bigskip}


\newcommand{\tr}{\text{\upshape tr\,}}
\newcommand{\res}{\text{\upshape Res\,}}

\newcommand{\re}{\text{\upshape Re\,}}

\newcommand{\im}{\text{\upshape Im\,}}

\newcommand{\E}{\mathcal{E}}
\renewcommand{\Re}{\text{Re }}
\newcommand{\U}{\mathsf{U}}
\newcommand{\V}{\mathsf{V}}
\newcommand{\Surface}{\mathcal{S}}

\DeclareMathOperator{\Int}{int}


\def\XXint#1#2#3{{\setbox0=\hbox{$#1{#2#3}{\int}$}
\vcenter{\hbox{$#2#3$}}\kern-.5\wd0}}


\newtheorem{theorem}{Theorem}

\newtheorem{proposition}{Proposition}[section]

\newtheorem{lemma}[proposition]{Lemma}

\numberwithin{equation}{section}


\input epsf
\date{\today}
\title[The hyperbolic Ernst--Maxwell equations in a triangular domain]
{The hyperbolic Ernst--Maxwell equations \\ in a triangular domain}

\author{Julian Mauersberger}
\address{Department of Mathematics, KTH Royal Institute of Technology, \\ 100 44 Stockholm, Sweden.}
\email{julianma@kth.se}

\begin{document}
\begin{abstract} 
\noindent In a recent paper, we applied Riemann--Hilbert techniques to analyze the Goursat problem for the hyperbolic Ernst equation, which describes the interaction of two colliding gravitational plane waves. Here we generalize this approach to colliding electromagnetic plane waves in Einstein--Maxwell theory. 
\end{abstract}

\maketitle

\noindent
{\small{\sc AMS Subject Classification (2010)}: 83C22, 37K15, 83C35, 35Q75.}

\noindent
{\small{\sc Keywords}: Electromagnetic waves, Einstein--Maxwell theory,  inverse scattering, Riemann--Hilbert problem.}

\setcounter{tocdepth}{1}

\section{Introduction}
In 1968, F. J. Ernst found \cite{E1968b} that each stationary axisymmetric solution of the full vacuum Einstein--Maxwell equations can be described by a system of two nonlinear equations. S. Chandrasekhar and B. C. Xanthopoulos \cite{CX1985} showed later that a similar reduction can be established in the case of colliding electromagnetic plane waves resulting in an integrable hyperbolic system of two nonlinear equations that can be written as
\begin{align}\label{ErnstMaxwellEquations}
\begin{cases}
(\Re \E - H\bar{H}) \left( \E_{xy} - \frac{\E_x+\E_y}{2(1-x-y)} \right) &= \E_x \E_y - \bar{H} (\E_x H_y+\E_yH_x), \\ 
(\Re \E - H\bar{H})\left( H_{xy} - \frac{H_x+H_y}{2(1-x-y)} \right)&= \frac{1}{2} (\E_x H_y + \E_y H_x)-2\bar{H} H_xH_y, 
\end{cases}
\end{align}
where the Ernst potentials $\mathcal{E}(x,y)$ and $H(x,y)$ are complex-valued functions of two real variables. In particular, it turns out that the interaction of two gravitational plane waves, each of which supports an electromagnetic shock wave, reduces to a Goursat problem for \eqref{ErnstMaxwellEquations} in the triangular region (see Figure \ref{figD})
\begin{align}\label{Ddef}
D=\{ (x,y) \in \R^2 \, | \, x\ge0, y\ge0, x+y<1 \}.
\end{align}
\begin{figure}
	\begin{center}
		\begin{tikzpicture}	
			\draw[white,shade, inner color = gray!25, outer color = gray!25] (-1.5,0) -- (-1.5,4.5) -- (3,0) -- (-1.5,0) ;
		\draw[very thick,->] (-2,0) -- (4,0);
		\draw[very thick,->] (-1.5,-0.5) -- (-1.5,5.5);
		\node at (4,-0.3) {$x$};
		\node at (-1.8,5.5) {$y$};
		
		\draw[dashed] (-1.5,4.5) -- (3,0);
		\node at (-1.8,4.5) {$1$};
		\node at (3,-0.3) {$1$};
		\node at (0,1.5) {$D$};	

		\node[rotate=90] at (-2.3,2) {\small $\E(0,y)=\E_1(y)$};
		\node[rotate=90] at (-1.8,2) {\small $H(0,y)=H_1(y)$};
		\node at (1,-0.3) {\small $\E(x,0)=\E_0(x)$};
		\node at (1,-0.8) {\small $H(x,0)=H_0(x)$};
		
		\node at (3,-0.3) {$1$};		
		\end{tikzpicture}
\caption{The triangular region $D$ defined in \eqref{Ddef} and the Goursat problem for the Ernst--Maxwell equations \eqref{ErnstMaxwellEquations}.} \label{figD}
\end{center}
\end{figure}
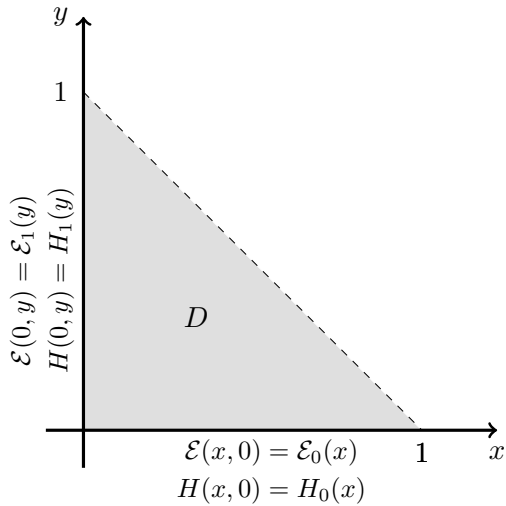
We seek (sufficiently regular) functions $\mathcal{E}(x,y)$ and $H(x,y)$ such that
\begin{align*}
\begin{cases}
\text{$\E$ and $H$ satisfy the equations \eqref{ErnstMaxwellEquations} in $D$,} \\
\mathcal{E}(x,0)=\mathcal{E}_0(x), \, H(x,0)=H_0(x), \quad x \in [0,1), \\ \mathcal{E}(0,y)=\mathcal{E}_1(y), \hspace{0.13cm} H(0,y)=H_1(y), \hspace{0.05cm} \quad y \in [0,1),
\end{cases}
\end{align*}
for some given boundary data $\{ \E_0, \E_1,H_0,H_1 \}$. 

In case of $H=0$, the equations \eqref{ErnstMaxwellEquations} reduce to the single hyperbolic Ernst equation. This equation is a variation of the original elliptic Ernst equation \cite{E1968}. The elliptic version of Ernst's equation describes stationary axisymmetric spacetimes whereas the hyperbolic version describes the interaction of colliding gravitational plane waves. The Goursat problem for the hyperbolic Ernst equation was analyzed in the recent paper \cite{LMernst} by using the integrable structure of the equation. 

In this paper, we generalize the approach adopted in \cite{LMernst} to the case of colliding electromagnetic plane waves. The Ernst--Maxwell equations \eqref{ErnstMaxwellEquations} admit a Lax pair involving $3 \times 3$-matrices, whereas the Lax pair for Ernst's equation is given in terms of $2 \times 2$-matrices. Nevertheless, a similar framework can be established here. However, due to the more complicated structure of the equations and the Lax pair, many steps become more involved.

The existing literature on the equations \eqref{ErnstMaxwellEquations} (see e.g. \cite{BS1974, CX1985, H1990}) focuses on the generation of new exact solutions instead of a general approach for solving the Goursat problem for \eqref{ErnstMaxwellEquations} as it is established here. However, besides \cite{LMernst}, inverse scattering approaches have been used in order to solve a boundary problem for colliding gravitational waves \cite{FST1999} and more recently to analyze boundary value problems for the elliptic version of Ernst's equation \cite{LF2011,L2011,LP2018}. All the papers \cite{FST1999,LF2011,L2011,LMernst,LP2018} are partially based on a general framework for solving boundary value problems known as the unified transform or Fokas method \cite{F1997} (see also \cite{BFS2006, DTV2014}). The unified transform has been applied to a variety of different integrable systems (see e.g.~\cite{BdMS2003,FIS2005,PP2010}).

In order to be relevant in Einstein--Maxwell theory, a solution of the Goursat problem for \eqref{ErnstMaxwellEquations} must satisfy the boundary conditions (see \cite{Griffiths1991} and appendix)
\begin{subequations}\label{boundaryvalues}
\begin{align} 
&(1-y)	\lim_{x\to 0} x \left( \frac{|\mathcal{E}_x(x,y)-2\bar{H}_1(y)H_x(x,y)|^2}{f_1(y)^2} + \frac{4 |H_x(x,y)|^2}{f_1(y)} \right)=2k_1,
\\
&(1-x)	\lim_{y\to 0} y  \left( \frac{|\mathcal{E}_y(x,y)-2\bar{H}_0(x)H_y(x,y)|^2}{f_0(x)^2} + \frac{4 |H_y(x,y)|^2}{f_0(x)} \right)=2k_2,
\end{align}
\end{subequations}
for some constants $k_1, k_2 \in [1/2,1)$. It is therefore crucial to allow for data with derivatives that are singular at the corners of the triangular domain $D$. Moreover, for a relevant solution $\{ \E, H \}$ of \eqref{ErnstMaxwellEquations}, the term $\Re \mathcal{E} -|H|^2$ is strictly positive. If $ \mathcal{E}$ and $H$ solve the equations \eqref{ErnstMaxwellEquations}, then other solutions of \eqref{ErnstMaxwellEquations} are given by (cf. \cite{Griffiths1991})
\begin{align*}
\{ \E+2\bar{\alpha}H+|\alpha|^2,H+\alpha \}, \{ |\beta|^2 \E, \beta H \}, \{ \mathcal{E}+ic,H \},
\end{align*}
where $\alpha, \beta$ are complex constants and $c$ is a real constant. Hence we may assume that $H(0,0)=0$ and $\mathcal{E}(0,0)=1$. This motivates our assumptions on the boundary data:
\begin{align}\label{AssumptionsBoundary}
\begin{cases}
\E_0,H_0, \E_1, H_1 \in C([0,1)) \cap C^n((0,1)), 
\\
\text{$x^\alpha \E_{0x},x^\alpha H_{0x},  y^\alpha \E_{1y},y^\alpha H_{1y} \in C([0,1))$,} 
\\
\E_0(0) = \E_1(0) = 1,
\\
H_0(0)=H_1(0)=0,
\\
\text{$\Re \E_0(x) -|H_0(x)|^2 >0$ for $x\in [0,1)$, }
\\
\text{$\Re \E_1(y) -|H_1(y)|^2 >0$ for $y\in [0,1)$, }
\end{cases}
\end{align}
for some $\alpha \in [0,1)$ and some integer $n\ge 2$.

In Section \ref{laxpairRHsection}, we introduce the tools that are necessary in the analysis of the Goursat problem for \eqref{ErnstMaxwellEquations} and their properties. This includes the Lax pair for \eqref{ErnstMaxwellEquations} and its spectral data as well as a corresponding Riemann--Hilbert (RH) problem. 

In Section \ref{mainresultssec}, we present the three main results of the paper. Theorem \ref{mainth1} treats uniqueness of the solution of the Goursat problem for \eqref{ErnstMaxwellEquations} and its representation in terms of a RH problem, whose formulation only involves the given boundary data. Theorem~\ref{mainth2} is an existence and regularity result for the same Goursat problem and in Theorem \ref{mainth3} we consider the boundary conditions \eqref{boundaryvalues}. 

Section \ref{proofssec} is devoted to the proofs of the main results.

In Section \ref{examplessec}, we present a simple family of examples for solutions of the Goursat problem for \eqref{ErnstMaxwellEquations} and compute their boundary values \eqref{boundaryvalues}. In the appendix, we explain how \eqref{ErnstMaxwellEquations} and \eqref{boundaryvalues} can be derived from the formulas given in the literature.

\section{Prerequisites and notation}\label{laxpairRHsection}
In this section, we present the Lax pair and a Riemann--Hilbert problem corresponding to the hyperbolic Ernst--Maxwell equations \eqref{ErnstMaxwellEquations}. In this context, we also introduce the notation used throughout the paper, which is adopted from \cite{LMernst}.
\subsection{Lax pair and spectral data}
The equations \eqref{ErnstMaxwellEquations} admit a Lax pair given by
\begin{align}\label{LaxPair}
\begin{cases}
\Phi_x(x,y,k) =  \U(x,y,k)\Phi(x,y,k),
\\
\Phi_y(x,y,k) =  \V(x,y,k)\Phi(x,y,k),
\end{cases}
\end{align}
where $k$ is a spectral parameter, $\Phi$ is a $3 \times 3$-matrix valued eigenfunction, and the $3 \times 3$-matrix valued functions $\U(x,y,k)$ and $\V(x,y,k)$ are defined by
\begin{align*}
\U= \begin{pmatrix}
\bar{A} & \lambda \bar{A} & \frac{i}{\sqrt{f}} H_x
\\
\lambda  A & A & -\lambda \frac{i}{\sqrt{f}} H_x
\\
\frac{i}{\sqrt{f}} \bar H_x & \lambda  \frac{i}{\sqrt{f}} \bar H_x & \frac{1}{2} (A + \bar A)
\end{pmatrix}, \qquad
\V= \begin{pmatrix}
\bar B & \frac{1}{\lambda} \bar B & \frac{i}{\sqrt{f}} H_y
\\
\frac{1}{\lambda}  B & B & - \frac{i}{\lambda\sqrt{f}} H_y 
\\
\frac{i}{\sqrt{f}} \bar H_y &   \frac{i}{\lambda\sqrt{f}} \bar H_y & \frac{1}{2} (B + \bar B)
\end{pmatrix},
\end{align*}
where $\lambda$ is defined by
\begin{align}\label{lambdadef}
\lambda^2 = \frac{k-(1-y)}{k-x},
\end{align}
and
\begin{align*}
A(x,y) &= \frac{1}{2f(x,y)} \left( \E_x(x,y) - 2\overline{H(x,y)} H_x(x,y) \right),
\\  
B(x,y) &= \frac{1}{2f(x,y)} \left( \E_y(x,y) - 2\overline{H(x,y)} H_y(x,y) \right), 
\\
f(x,y) &= \frac{\E(x,y) + \overline{\E(x,y)}}{2} - H(x,y) \overline{H(x,y)} = \Re \E (x,y) - |H(x,y)|^2,
\end{align*}
for two complex-valued functions $\mathcal{E},H \in C^2(\Int D)$. Here $D$ is defined by \eqref{Ddef}.

The appearance of the multivalued function $\lambda$ suggests considering the Riemann surface $\Surface_{(x,y)}$, $(x,y)\in D$, consisting of all pairs $(\lambda,k) \in \hat{\C}^2$ satisfying \eqref{lambdadef}, where $\hat{\C} =\C \cup \{ \infty \}$ denotes the Riemann sphere. Note that $\Surface_{(x,y)}$ contains the points $\infty^- =(-1,\infty)$, $\infty^+ =(1,\infty)$, and $x=(\infty,x)$. The surface $\Surface_{(x,y)}$ can be realized by introducing a branch cut from $x$ to $1-y$ in the complex $k$-plane and assigning to each $k \in \hat{\C} \setminus [x,1-y]$ two values $k^\pm=(\lambda(x,y,k^\pm),k) \in \Surface_{(x,y)}$, where
\[
\lambda(x,y,k^+) = \sqrt{\frac{k-(1-y)}{k-x}}
\]
is characterized by having positive real part and $\lambda(x,y,k^-) = -\lambda(x,y,k^+)$. The function
\[
F_{(x,y)}: \Surface_{(x,y)} \to \hat{\C},\; (\lambda,k) \mapsto z=\frac{1+\lambda}{1-\lambda}
\]
is a biholomorphism and thus $\Surface_{(x,y)}$ is topologically a sphere. The map $F_{(x,y)}$ maps points on the branch cut in $\Surface_{(x,y)}$ onto the unit circle in $\hat{\C}$ and the branch points $x$ and $1-y$ are mapped to $-1$ and $1$, respectively. The Riemann surface $\Surface_{(x,y)}$ as well as the map $F_{(x,y)}$ are visualized in Figure \ref{figSandF}.
\begin{figure}
	\begin{center}
		\begin{tikzpicture}	
		\draw[thick] (-6,0) -- (-1.5,0) -- (-0.5,1)-- (-5,1)--(-6,0);
		\draw[thick] (-6,-1.5) -- (-1.5,-1.5) -- (-0.5,-0.5)-- (-5,-0.5)--(-6,-1.5);
		\draw[inner color = black, outer color=black](-4,0.5) circle (0.05cm);
		\draw[inner color = black, outer color=black](-4,-1) circle (0.05cm);
		\draw[inner color = black, outer color=black](-2.5,0.5) circle (0.05cm);
		\draw[inner color = black, outer color=black](-2.5,-1) circle (0.05cm);
		\draw[dashed] (-3.95,0.5)--(-2.55,0.5);
		\draw[dashed] (-3.95,-1)--(-2.55,-1);
		\node at (-3.4,-2.5) {\small $\Surface_{(x,y)}$};
		\node at (-4,0.3) {\tiny $x$};
		\node at (-2.5,0.3) {\tiny $1-y$};
		\node at (-4,-1.2) {\tiny $x$};
		\node at (-2.5,-1.2) {\tiny $1-y$};
		
		\draw[thick,->] (0.5,0.5)--(1.5,0.5);
		\node at (1,0.8) {\small $F_{(x,y)}$};
		
		\draw[thick,->] (5,-2) -- (5,2);
		\draw[thick,->] (2.5,0) -- (7.5,0);
		\draw[dashed] (5,0) circle (1cm);
		\draw[inner color = black, outer color=black](4,0) circle (0.05cm);
		\draw[inner color = black, outer color=black](6,0) circle (0.05cm);
		\node at (5,2.3) {\small $\im z$};
		\node at (7.9,0) {\small $\re z$};
		\node at (3.7,-0.3) {\small $-1$};
		\node at (6.2,-0.3) {\small $1$};
		\node at (5,-2.5) {\small $\hat{\C}$};
		\end{tikzpicture}
		\caption{The two sheets of the Riemann surface $\Surface_{(x,y)}$ and the map $F_{(x,y)}$.} \label{figSandF}
	\end{center}
\end{figure}
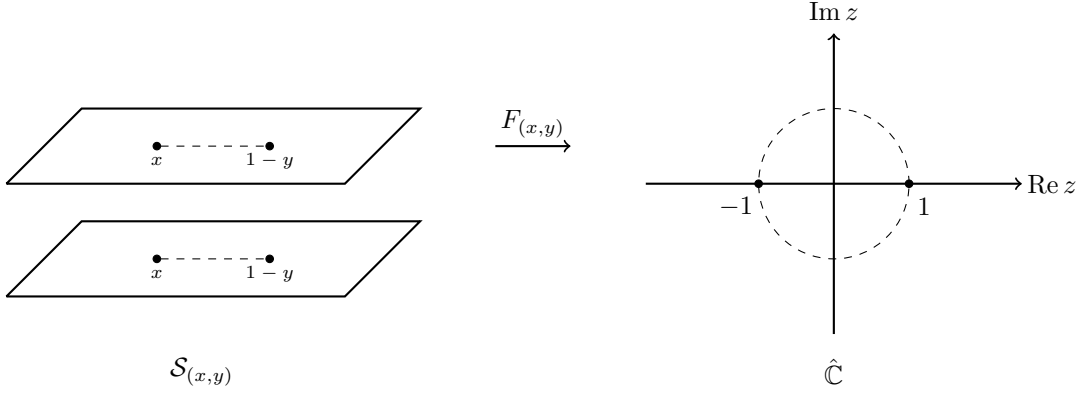

We define the oriented paths $\Sigma_{j} \subset \Surface_{(x,y)}$, $j=0,1$, by
\[
\Sigma_0= [0^+,x] \cup [x,0^-], \quad \Sigma_1 = [1^-,1-y] \cup [1-y,1^+],
\]
and the oriented simply closed curves $\Gamma_j \subset \C$, $j=0,1$, to be two clockwise oriented loops encircling $F_{(x,y)}(\Sigma_j)$, $j=0,1$, respectively, at some positive distance, but neither encircling nor intersecting zero. The contour $\Gamma$ is defined to be the union $\Gamma=\Gamma_0\cup \Gamma_1$ (see Figure \ref{figGamma}). By $\Omega_\infty$, $\Omega_0$, and $\Omega_1$ we denote the components of $\C \setminus \Gamma$ containing $0$, $-1$, and $1$, respectively (see Figure \ref{figOmega}).
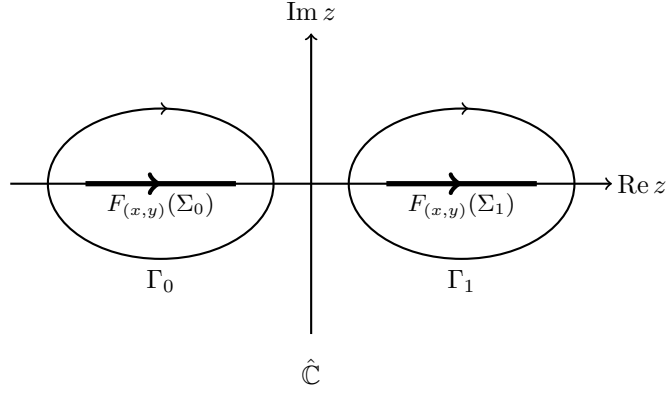
\begin{figure}
	\begin{center}
		\begin{tikzpicture}	
		
		\draw[thick,->] (5,-2) -- (5,2);
		\draw[thick,->] (1,0) -- (9,0);
		\node at (5,2.3) {\small $\im z$};
		\node at (9.4,0) {\small $\re z$};
		\node at (5,-2.5) {\small $\hat{\C}$};
		
		\draw[line width=0.7mm,->] (2,0)--(3,0);
		\draw[line width=0.7mm] (2.9,0)--(4,0);
		
		\draw[line width=0.7mm,->] (6,0)--(7,0);
		\draw[line width=0.7mm] (6.9,0)--(8,0);
		\node at (7,-0.3) {\footnotesize $F_{(x,y)}(\Sigma_1)$};
		\node at (3,-0.3) {\footnotesize $F_{(x,y)}(\Sigma_0)$};
		
		\draw[thick,decoration={markings, mark=at position 0.25 with {\arrow{<}}},
		postaction={decorate}] (7,0) ellipse (1.5cm and 1cm);
		\draw[thick,decoration={markings, mark=at position 0.25 with {\arrow{<}}},
		postaction={decorate}] (3,0) ellipse (1.5cm and 1cm);
		\node at (7,-1.3) {\small $\Gamma_1$};
		\node at (3,-1.3) {\small $\Gamma_0$};
	
		\end{tikzpicture}
		\caption{The contour $\Gamma=\Gamma_0\cup \Gamma_1$.} \label{figGamma}
	\end{center}
\end{figure}

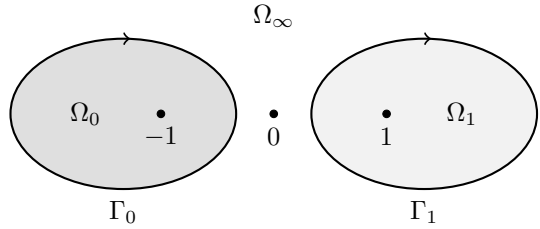
\begin{figure}
	\begin{center}
		\begin{tikzpicture}	
			
		\draw[inner color = black, outer color=black](5,0) circle (0.05cm);	
		\node at (5,-0.3) {\small $0$};

		\draw[thick,decoration={markings, mark=at position 0.25 with {\arrow{<}}},
		postaction={decorate}, inner color =gray!10, outer color=gray!10] (7,0) ellipse (1.5cm and 1cm);
		\draw[thick,decoration={markings, mark=at position 0.25 with {\arrow{<}}},
		postaction={decorate}, inner color =gray!25, outer color=gray!25] (3,0) ellipse (1.5cm and 1cm);
		\node at (7,-1.3) {\small $\Gamma_1$};
		\node at (3,-1.3) {\small $\Gamma_0$};
		\node at (7.5,0) {\small $\Omega_1$};
		\node at (2.5,0) {\small $\Omega_0$};
		\node at (5,1.3) {\small $\Omega_\infty$};
		
		\draw[inner color = black, outer color=black](3.5,0) circle (0.05cm);	
		\node at (3.5,-0.3) {\small $-1$};
		\draw[inner color = black, outer color=black](6.5,0) circle (0.05cm);	
		\node at (6.5,-0.3) {\small $1$};
		
		\end{tikzpicture}
		\caption{The regions $\Omega_\infty$, $\Omega_0$, and $\Omega_1$.} \label{figOmega}
	\end{center}
\end{figure}

Henceforth we consider $\lambda(x,y,P)$, $\Phi(x,y,P)$, $\U(x,y,P)$, and $\V(x,y,P)$,  $P \in \Surface_{(x,y)}$, as functions defined  on the Riemann surface $\Surface_{(x,y)}$. Assuming the functions $\mathcal{E}_0(x)$, $H_0(x)$, $\E_1(y)$, $H_1(y)$ satisfy the conditions \eqref{AssumptionsBoundary} for some $n \ge 2$, we define for $k \in \hat{\C} \setminus [0,1]$ and $(x,y) \in D$ the functions $\Phi_0$ and $\Phi_1$ to be the unique solutions of the Volterra equations
\begin{subequations}\label{phi0phi1def}
\begin{align}
	\Phi_0(x,k^\pm) &= I + \int_{0}^{x} \U_0(x',k^\pm )\Phi_0(x',k^\pm)dx',\label{phi0phi1defa}
	\\
	\Phi_1(y,k^\pm) &= I + \int_0^{y} \V_1(y',k^\pm) \Phi_1(x',k^\pm)dy',\label{phi0phi1defb}
\end{align}
\end{subequations}
where $\U_0$ and $\V_1$ are defined by
\begin{subequations} \label{DefUV}
	\begin{align}
	\U_0(x,k^\pm)&= \begin{pmatrix}
	\overline{A_0(x)} & \lambda(x,0,k^\pm) \overline{A_0(x)} & \frac{i}{\sqrt{f_0(x)}} H_{0x}(x)
	\\
	\lambda(x,0,k^\pm)  A_0(x) & A_0(x) & - \frac{i\lambda(x,0,k^\pm)}{\sqrt{f_0(x)}} H_{0x}(x) 
	\\
	\frac{i}{\sqrt{f_0(x)}} \overline{H_{0x}(x)} & \lambda(x,0,k^\pm)  \frac{i}{\sqrt{f_0(x)}} \overline{H_{0x}(x)} & \frac{1}{2} (A_0(x) + \overline{A_0(x)})
	\end{pmatrix}, 
	\\
	\V_1(y,k^\pm)&= \begin{pmatrix}
	\overline{B_1(y)} & \frac{1}{\lambda(0,y,k^\pm)} \overline{B_1(y)} & \frac{i}{\sqrt{f_1(y)}} H_{1y}(y))
	\\
	\frac{1}{\lambda(0,y,k^\pm)}  B_1(y) & B_1(y) & - \frac{i}{\lambda(0,y,k^\pm)\sqrt{f_1(y)}} H_{1y}(y) 
	\\
	\frac{i}{\sqrt{f_1(y)}} \overline{H_{1y}(y)} &   \frac{i}{\lambda(0,y,k^\pm)\sqrt{f_1(y)}} \overline{H_{1y}(y)} & \frac{1}{2} (B_1(y) + \overline{B_1(y)})
	\end{pmatrix},
	\end{align}
\end{subequations}
and
\begin{align*}
A_0(x) &= \frac{1}{2f_0(x)} \left( \E_{0x}(x) - 2\overline{H_0(x)} H_{0x}(x) \right),
\\
f_0(x) &= \frac{\E_0(x) + \overline{\E_0(x)}}{2} - H_0(x) \overline{H_0(x)}>0, 
\\
B_1(y) &= \frac{1}{2f_1(y)} \left( \E_{1y}(y) - 2\overline{H_1(y)} H_{1y}(y) \right), 
\\
f_1(y) &= \frac{\E_1(y) + \overline{\E_1(y)}}{2} - H_1(y) \overline{H_1(y)}>0.
\end{align*}
Note that it follows from \eqref{AssumptionsBoundary} that $f_0 >0$ and $f_1>0$. The following lemma takes care of existence and uniqueness of the eigenfunctions.
\begin{lemma} \label{lemma: uniqueness and existence phi0phi1}
For each fixed $k \in \hat{\C} \setminus [0,1]$, there exist unique solutions 
\begin{align*}
x\mapsto\Phi_0(x,k^\pm) \in C([0,1))\cap C^n((0,1)) \quad \mbox{and} \quad y\mapsto\Phi_1(y,k^\pm)\in  C([0,1))\cap C^n((0,1))
\end{align*}
of the Volterra equations \eqref{phi0phi1defa} and \eqref{phi0phi1defb}, respectively.
\end{lemma}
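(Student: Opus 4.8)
The plan is to treat \eqref{phi0phi1defa} and \eqref{phi0phi1defb} as standard linear Volterra integral equations of the second kind and to solve them by the method of successive approximations (Picard iteration), the only nonstandard feature being that the kernel is integrably singular at the left endpoint $x=0$. I focus on \eqref{phi0phi1defa}; the argument for \eqref{phi0phi1defb} is identical with $x,\E_0,H_0,f_0,A_0,\U_0$ replaced by $y,\E_1,H_1,f_1,B_1,\V_1$ and with $1/\lambda(0,y,k^\pm)$ in place of $\lambda(x,0,k^\pm)$.

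Fix $k\in\hat{\C}\setminus[0,1]$ and one of the two sheets $k^\pm$. First I would record the regularity and growth of the kernel $\U_0(\cdot,k^\pm)$. Since $k\notin[0,1]$, the denominator $k-x$ in \eqref{lambdadef} with $y=0$ never vanishes for $x\in[0,1)$, so $x\mapsto\lambda(x,0,k^\pm)$ is bounded, continuous on $[0,1)$ and real-analytic on $(0,1)$. By \eqref{AssumptionsBoundary} the data $\E_0,H_0$ lie in $C([0,1))\cap C^n((0,1))$ with $x^\alpha\E_{0x},x^\alpha H_{0x}\in C([0,1))$ for some $\alpha\in[0,1)$; since $f_0$ is continuous and strictly positive on $[0,1)$, and hence bounded below on each compact subinterval, the functions $A_0$, $\overline{A_0}$ and $H_{0x}/\sqrt{f_0}$ are $O(x^{-\alpha})$ as $x\to0$ and belong to $C^{n-1}((0,1))$. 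Consequently every entry of $\U_0(\cdot,k^\pm)$ satisfies $\|\U_0(x,k^\pm)\|\le Cx^{-\alpha}$ near $0$ and lies in $C^{n-1}((0,1))$. The crucial consequence is that, because $\alpha<1$, the quantity
\[
M(x):=\int_0^x\|\U_0(x',k^\pm)\|\,dx'
\]
is finite for every $x\in[0,1)$, continuous and nondecreasing, with $M(0)=0$.

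With this in hand, existence and uniqueness follow from the classical Volterra/Neumann-series argument. Setting $\Phi_0^{(0)}\equiv I$ and $\Phi_0^{(m+1)}(x)=I+\int_0^x\U_0(x',k^\pm)\Phi_0^{(m)}(x')\,dx'$, an induction gives the estimate $\|\Phi_0^{(m+1)}(x)-\Phi_0^{(m)}(x)\|\le M(x)^{m+1}/(m+1)!$, so the telescoping series $\sum_m(\Phi_0^{(m+1)}-\Phi_0^{(m)})$ converges uniformly on every compact subinterval of $[0,1)$ to a continuous limit $\Phi_0(\cdot,k^\pm)\in C([0,1))$ solving \eqref{phi0phi1defa}; continuity up to $x=0$ is exactly where the integrability $M(0)=0$ is used. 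Uniqueness follows because the difference of two solutions satisfies a homogeneous Volterra inequality and Gronwall's lemma, applicable since $\|\U_0(\cdot,k^\pm)\|\in L^1_{\mathrm{loc}}([0,1))$, forces it to vanish.

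Finally, the claimed regularity $\Phi_0(\cdot,k^\pm)\in C^n((0,1))$ is obtained by bootstrapping inside the open interval. On $(0,1)$ the kernel $\U_0(\cdot,k^\pm)$ is $C^{n-1}$ and $\Phi_0$ is continuous, so the integrand $\U_0\Phi_0$ is continuous and the fundamental theorem of calculus yields $\Phi_0\in C^1((0,1))$ with $\Phi_{0x}=\U_0\Phi_0$. If $\Phi_0\in C^j((0,1))$ with $j\le n-1$, then $\Phi_{0x}=\U_0\Phi_0\in C^j((0,1))$ and hence $\Phi_0\in C^{j+1}((0,1))$; iterating until $j=n-1$ gives $\Phi_0\in C^n((0,1))$. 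The main obstacle throughout is the singular behavior of the kernel at the corner $x=0$ caused by the possibly unbounded derivatives $\E_{0x},H_{0x}$; the whole argument hinges on the assumption $\alpha<1$, which keeps $\|\U_0(\cdot,k^\pm)\|$ integrable near $0$ and thereby both drives the convergence of the Neumann series and secures continuity of $\Phi_0$ at the endpoint.
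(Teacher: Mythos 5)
Your proposal is correct and follows essentially the same route as the paper: successive approximations driven by the $L^1$-integrability of the kernel $\U_0(\cdot,k^\pm)$ near $x=0$ (which is exactly where $\alpha<1$ enters), with uniform convergence on compact subsets of $[0,1)$ giving existence and continuity. The only cosmetic deviations are that you invoke Gronwall for uniqueness where the paper iterates the homogeneous equation to get factorial decay, and you obtain $C^n((0,1))$ by bootstrapping the ODE $\Phi_{0x}=\U_0\Phi_0$ rather than by termwise differentiation of the Neumann series; both variants are standard and equally valid.
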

\begin{proof}
The proof is based on the method of successive approximations. We fix some compact subset $K\subset \C \setminus [0,1]$ and define
\begin{align} \label{defphi0second}
\Phi_0(x,k^\pm) = \sum_{j=0}^{\infty} \Phi_0^{(j)}(x,k^\pm), \qquad x \in [0,1), \quad k \in K,
\end{align}
where $\Phi_0^{(0)}=I$ and, for $j\ge 1$,
\begin{align} \label{defphi0j}
\Phi_0^{(j)}(x,k^\pm) = \int_{0 \le x_1 \le \cdots x_j\le x} \U_0(x_j,k^\pm) \U_0(x_{j-1},k^\pm)\cdots \U_0(x_1,k^\pm)dx_1 \cdots dx_j.
\end{align}
In view of \eqref{AssumptionsBoundary} and the fact that the functions $(x,k) \mapsto \lambda(x,0,k^\pm)$ are bounded on $[0,1) \times K$, we obtain the estimate
\begin{align*}
 \lVert \U_0(\cdot,k^\pm) \rVert_{L^1([0,x))} \le C(x),\qquad x\in [0,1),\quad k\in K,
\end{align*}
where the constant $C(x)$ is bounded on compact subsets of $[0,1)$. Consequently, we obtain
\begin{align} \label{estimatephi0j}
\big| \Phi_0^{(j)}(x,k^\pm)  \big| \le \frac{1}{j!}\lVert \U_0(\cdot,k^\pm) \rVert_{L^1([0,x))}^j \le \frac{C(x)^j}{j!}, \qquad x\in [0,1),\quad k\in K.
\end{align}
This implies that the series in \eqref{defphi0second} converges absolutely and uniformly for $k \in K$ and $x$ in compact subsets of $[0,1)$ to a solution of \eqref{phi0phi1defa}. The continuity of $\Phi_0$ follows from the fact that $x\mapsto \Phi_0^{(j)}(x,k^\pm)$ is continuous and the uniform and absolute convergence of the series in \eqref{defphi0second}. It is easy to see that $x\mapsto \Phi_0^{(j)}(x,k^\pm) \in C^n((0,1))$ and that the derivatives satisfy estimates analogous to \eqref{estimatephi0j}. Hence the uniform and absolute convergence in \eqref{defphi0second} and the differentiated versions of \eqref{defphi0second} shows that $x\mapsto \Phi_0(x,k^\pm) \in C^n((0,1))$.

Let $\tilde{\Phi}(x,k^\pm)$ be another continuous solution of \eqref{phi0phi1defa} and define $\Psi = \Phi_0-\tilde{\Phi}$. Then $\Psi$ satisfies the homogeneous equation
\begin{align*}
\Psi(x,k^\pm) = \int_{0}^{x} \U_0(x',k^\pm) \Psi(x',k^\pm) dx',\qquad x\in [0,1),\quad k\in K,
\end{align*}
which implies
\begin{align*}
\Psi(x,k^\pm)&= \int_{0 \le x_1 \le \cdots x_j\le x} \U_0(x_j,k^\pm) \U_0(x_{j-1},k^\pm)\cdots \U_0(x_1,k^\pm)\Psi(x_1,k^\pm) dx_1 \cdots dx_j,
\end{align*}
for all $x\in [0,1)$, $k\in K$, and $j\ge1$. Hence, for all $x\in [0,1)$ and $k \in K$,
\begin{align*}
\big| \Psi(x,k^\pm) \big| \le \sup_{x' \in [0,x]} \big| \Psi(x',k^\pm) \big| \frac{\lVert \U_0(\cdot,k^\pm) \rVert_{L^1([0,x))}^j }{j!} \to 0, \qquad j\to \infty.
\end{align*}
This yields $\Psi=0$ and hence $\Phi_0$ is the unique solution of \eqref{phi0phi1defa}. The proof for $\Phi_1$ is analogous.
\end{proof}
The following lemma summarizes some properties of the eigenfunctions $\Phi_0$ and $\Phi_1$.
\begin{lemma} \label{lemma: Propertiesphi0ph1}
	Let $\Phi_0$ and $\Phi_1$ be the unique solutions of the Volterra integral equations \eqref{phi0phi1def}. Then the following statements hold:
\begin{enumerate}[(i)]	
	\item For fixed $(x,y) \in D$, the functions $ k \mapsto \Phi_0(x,k^\pm)$ and $k \mapsto \Phi_1(y,k^\pm)$ are analytic on $\hat{\C} \setminus  [0,1]$. Moreover, $\Phi_0(x,\cdot )$ extends to an analytic function on $\Surface_{(x,y)} \setminus (\Sigma_0\cup \Sigma_1)$ and $\Phi_1(\cdot,y)$ extends to an analytic function on $\Surface_{(x,y)} \setminus (\Sigma_0\cup \Sigma_1)$ (for any $(x,y) \in D$).
	\item At the point $P=\infty^+$, we have for each $(x,y) \in D$
	\begin{subequations}\label{Phi0Phi1atinfinity}
		\begin{align} 
		\Phi_0(x,\infty^+) &= \frac{1}{2}\begin{pmatrix}
		\overline{\E_0(x)} - 2H_0(x) \overline{H_0(x)} &1& i H_0(x) \\
		\E_0(x) & -1&-iH_0(x) \\
		2i\overline{H_0(x)}\sqrt{f_0(x)}  &0&\sqrt{f_0(x)}
		\end{pmatrix}
		\begin{pmatrix}
		1 & 1 & 0 \\
		1 & -1 & 0 \\
		0 & 0 & 2 
		\end{pmatrix}, \label{Phi0Phi1atinfinitya}\\
		\Phi_1(y,\infty^+) &= \frac{1}{2}\begin{pmatrix}
		\overline{\E_1(y)} - 2H_1(y) \overline{H_1(y)} &1& i H_1(y) \\
		\E_1(y) & -1&-iH_1(y) \\
		2i\overline{H_1(y)}\sqrt{f_1(y)}  &0&\sqrt{f_1(y)}
		\end{pmatrix}
		\begin{pmatrix}
		1 & 1 & 0 \\
		1 & -1 & 0 \\
		0 & 0 & 2 
		\end{pmatrix}\label{Phi0Phi1atinfinityb}.
		\end{align}
	\end{subequations}
	
\item For all $(x,y) \in D $ and $P \in \Surface_{(x,y)} \setminus  (\Sigma_0\cup \Sigma_1)$, it holds that
	\begin{subequations}\label{detPhi0Phi1}
		\begin{align} 
		\det \Phi_0(x,P) &= f_0(x)^{\frac{3}{2}}, 
		\\
		\det \Phi_1(y,P) &= f_1(y)^{\frac{3}{2}}.
		\end{align}
	\end{subequations}
\item The functions $\Phi_0$ and $\Phi_1$ obey the symmetries  \begin{subequations}\label{SymmetriesPhi0Phi1}
\begin{align} 
	\Phi_0(x,k^+) &= \Lambda  \Phi_0(x,k^-) \Lambda ,& \overline{\Phi_0(x,\bar{k}^\pm)} &=  f_0(x) \Lambda  (\Phi_0(x,k^\pm)^{-1})^t \Lambda,  \label{SymmetriesPhi0Phi1a}
	\\
	\Phi_1(y,k^+) &= \Lambda  \Phi_1(y,k^-) \Lambda , &\overline{\Phi_1(y,\bar{k}^\pm)} &=  f_1(y) \Lambda  (\Phi_1(y,k^\pm)^{-1})^t \Lambda, \label{SymmetriesPhi0Phi1b}
	\end{align}
\end{subequations}
for $(x,y)\in D$, $k \in \hat{\C} \setminus [0,1]$,  where
\[
\Lambda = \begin{pmatrix}
1 & 0 & 0 
\\
0 & -1 & 0
\\
0 & 0 & 1
\end{pmatrix}.
\]
\item Let $(x_0,y_0) \in \Int D$ be fixed and $K \subset \hat{\C} \setminus ([0,x_0] \cup [1-y_0,1])$ be compact. Then the maps
\begin{subequations}\label{ktophi}
\begin{align}
x &\mapsto (k \mapsto \Phi_0(x,k^\pm)),\label{ktophia}
\\
y &\mapsto (k \mapsto \Phi_1(y,k^\pm))\label{ktophib}
\end{align}
\end{subequations}
are continuous as maps $[0,x_0) \to L^\infty(K)$ and $[0,y_0) \to L^\infty(K)$, and $C^n$ as maps $(0,x_0) \to L^\infty(K)$ and $(0,y_0) \to L^\infty(K)$, respectively. Furthermore, the maps
\begin{align*}
x &\mapsto (k \mapsto x^\alpha\Phi_0(x,k^\pm)),
\\
y &\mapsto (k \mapsto y^\alpha\Phi_1(y,k^\pm))
\end{align*}
are continuous as maps $[0,x_0) \to L^\infty(K)$ and $[0,y_0) \to L^\infty(K)$, respectively.
\end{enumerate}
\end{lemma}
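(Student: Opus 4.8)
\noindent The plan is to reduce every assertion to one of two mechanisms: the Neumann series $\Phi_0=\sum_{j\ge 0}\Phi_0^{(j)}$ from \eqref{defphi0second} together with the uniform bounds \eqref{estimatephi0j}, or the uniqueness of solutions of the linear Volterra equations \eqref{phi0phi1def}. I discuss only $\Phi_0$, the statements for $\Phi_1$ being identical after exchanging $(x,\U_0,A_0,f_0)$ for $(y,\V_1,B_1,f_1)$ and $\lambda(\cdot,0,\cdot)$ for $\lambda(0,\cdot,\cdot)$. One identity will recur: from the definitions of $A_0$ and $f_0$,
\begin{align}\label{fident}
A_0+\overline{A_0}=\frac{f_{0x}}{f_0},\qquad\text{equivalently}\qquad (\sqrt{f_0})_x=\tfrac12(A_0+\overline{A_0})\sqrt{f_0}.
\end{align}

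\smallskip\noindent\emph{Part (i).} Each $\Phi_0^{(j)}(x,\cdot^\pm)$ in \eqref{defphi0j} is an iterated integral of the matrices $\U_0(x',\cdot^\pm)$, which depend on $k$ only through $\lambda(x',0,k^\pm)=\sqrt{(k-1)/(k-x')}$, analytic for $k\in\hat{\C}\setminus[x',1]$. Hence every $\Phi_0^{(j)}(x,\cdot^\pm)$ is analytic on $\hat{\C}\setminus[0,1]$, and by \eqref{estimatephi0j} the series converges locally uniformly there, so $k\mapsto\Phi_0(x,k^\pm)$ is analytic on $\hat{\C}\setminus[0,1]$ by Weierstrass' theorem. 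For the extension to $\Surface_{(x,y)}$ I would argue that the only possible non-analyticity lies over $[0,1]$, and that across the cut $[x,1-y]$ of $\Surface_{(x,y)}$ the function glues analytically: since $x'\le x< 1-y\le 1$, one has $[x,1-y]\subseteq[x',1]$ for every $x'\in[0,x]$, so each $\lambda(x',0,\cdot)$ changes sign across $[x,1-y]$ exactly as $\lambda(x,y,\cdot)$ does. This simultaneous sign change makes the boundary values of all $\U_0(x',\cdot^\pm)$ agree after the sheet swap, so by Morera the Volterra solution continues analytically across $[x,1-y]$. The surviving singular set is the part over $[0,x]$ and over $[1-y,1]$, namely $\Sigma_0\cup\Sigma_1$.

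\smallskip\noindent\emph{Parts (ii)--(iv).} Each is a ``guess and verify by uniqueness'' argument. For (ii), at $P=\infty^+$ we have $\lambda(x,0,\infty^+)=1$; denoting by $R(x)$ the right-hand side of \eqref{Phi0Phi1atinfinitya}, one checks $R(0)=I$ using $\E_0(0)=1$, $H_0(0)=0$, $f_0(0)=1$, and then verifies $R_x=\U_0(x,\infty^+)R$ by direct differentiation of the entries, the third row being governed by \eqref{fident}; uniqueness gives $R=\Phi_0(\cdot,\infty^+)$. For (iii), Abel's formula yields $\tfrac{d}{dx}\det\Phi_0=(\tr\U_0)\det\Phi_0$; the diagonal of $\U_0$ is independent of $k$, so $\tr\U_0=\tfrac32(A_0+\overline{A_0})=\tfrac32 f_{0x}/f_0$ by \eqref{fident}, and integrating from $x=0$ (where $\det\Phi_0=1$ and $f_0=1$) gives $\det\Phi_0(x,P)=f_0(x)^{3/2}$, independent of $P$. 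For (iv), the first symmetry follows by setting $\Psi:=\Lambda\Phi_0(\cdot,k^-)\Lambda$ and checking $\Lambda\U_0(x,k^-)\Lambda=\U_0(x,k^+)$ (replacing $\lambda\to-\lambda$ and conjugating by $\Lambda$ reverses the two sign changes on the off-diagonal entries), whence $\Psi=\Phi_0(\cdot,k^+)$ by uniqueness. For the second symmetry, set $\Psi:=f_0\,\Lambda(\Phi_0(\cdot,k^\pm)^{-1})^t\Lambda$; using $\tfrac{d}{dx}(\Phi_0^{-1})^t=-\U_0^t(\Phi_0^{-1})^t$ and \eqref{fident} one obtains $\Psi_x=\big[(A_0+\overline{A_0})I-\Lambda\U_0^t\Lambda\big]\Psi$, and a direct computation shows the bracket equals $\overline{\U_0(x,\bar k^\pm)}$ (here $\overline{\lambda(x,0,\bar k^\pm)}=\lambda(x,0,k^\pm)$ is used); since $\Psi(0)=I=\overline{\Phi_0(0,\bar k^\pm)}$, uniqueness gives the identity.

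\smallskip\noindent\emph{Part (v).} Fix $x_1\in(0,x_0)$. Because $K$ lies at positive distance from $[0,x_0]\ni x$, the factors $\lambda(x,0,k^\pm)$ are bounded on $[0,x_1]\times K$, and by part (i) $\Phi_0(x,\cdot^\pm)$ is analytic, hence bounded, on $K$. Thus \eqref{estimatephi0j} holds uniformly in $k\in K$ with $C(x)$ bounded on $[0,x_1]$, and the series \eqref{defphi0second} converges in $L^\infty(K)$ uniformly for $x\in[0,x_1]$. By \eqref{AssumptionsBoundary} every entry of $\U_0$ is $O(x^{-\alpha})$ with $\alpha<1$, so $\U_0(\cdot,k^\pm)\in L^1([0,x_1])$ uniformly in $k\in K$; the bound $\|\Phi_0(x,\cdot)-\Phi_0(x',\cdot)\|_{L^\infty(K)}\le C\|\U_0\|_{L^1([x',x])}$ then gives continuity on $[0,x_0)$, including at $x=0$ where $\Phi_0\to I$. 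On $(0,x_0)$ the entries of $\U_0$ are $C^{n-1}$ and bounded, so differentiating $\Phi_{0x}=\U_0\Phi_0$ and bootstrapping yields $C^n$ regularity as a map into $L^\infty(K)$. The weighted statement is then immediate, since $x\mapsto x^\alpha$ is continuous on $[0,x_0)$ and $\Phi_0$ is bounded.

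\smallskip\noindent The matrix identities in (ii)--(iv) are pure bookkeeping. The two genuinely delicate points, and where I expect the real work, are: in (i), verifying that the sheet structure of $\Surface_{(x,y)}$ is compatible with the simultaneous sign changes of all the moving branch functions $\lambda(x',0,\cdot)$, $x'\in[0,x]$, so that the gluing across $[x,1-y]$ is legitimate; and in (v), obtaining estimates uniform in $k\in K$ up to the singular endpoint $x=0$, which hinges on the integrability $\U_0\in L^1$ near $0$ guaranteed by $\alpha<1$.
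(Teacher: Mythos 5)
Your proposal is correct and follows essentially the same route as the paper: analyticity and the sheet-gluing across $(x,1-y)$ via the simultaneous sign change of the $\lambda(x',0,\cdot)$, the identities (ii)--(iv) by guess-and-verify combined with uniqueness of the Volterra/ODE solution (with the same key facts $\lambda(x,0,\infty^+)=1$, $\tr\U_0=\tfrac32 f_{0x}/f_0$, $\overline{\U_0(x,\bar k^\pm)}=\tfrac{f_{0x}}{f_0}I-\Lambda\U_0^t\Lambda$), and part (v) by the same uniform $L^1$-in-$x$, $L^\infty$-in-$k$ estimates. The only deviations are cosmetic --- Abel's formula in place of the paper's $\ln\det$ computation, and a differential rather than integral-equation verification of the second symmetry --- and they do not change the substance of the argument.
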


\begin{proof}
We prove the statements (i)--(v) for $\Phi_0$. The proofs for $\Phi_1$ are similar. It follows from the definition of $\U_0$, equation \eqref{defphi0j}, and the uniform and absolute convergence of the series in \eqref{defphi0second} that $k\mapsto \Phi_0(x,k^\pm)$ is analytic on $\hat{\C} \setminus [0,1]$. Since the function $k\mapsto \U_0(x,k^\pm)$ has continuous boundary values onto $(x,1-y)$, the same is true for $k\mapsto \Phi_0(x,k^\pm)$ by extending the argument in the proof of Lemma \ref{lemma: uniqueness and existence phi0phi1} to compact sets $K$ reaching up to the branch cut $(x,1-y)$. Furthermore, since the values of $\U_0(x,k^+)$ and $\U_0(x,k^-)$ fit together across the branch cut $(x,1-y)$ of $\mathcal{S}_{(x,y)}$, the same is true for $\Phi_0(x,k^+)$ and $\Phi_0(x,k^-)$ by uniqueness of the solution of the Volterra equation \eqref{phi0phi1defa}. This completes the proof of (i).
	
Noting that $\lambda(x,0,\infty^+)=1$, we obtain that
\begin{align*}
\begin{pmatrix}
\overline{\E_0(x)} - 2H_0(x) \overline{H_0(x)}  \\
\E_0(x) \\
2i\overline{H_0(x)}\sqrt{f_0(x)} 
\end{pmatrix}, \quad
\begin{pmatrix}
1 \\
 -1 \\
0
\end{pmatrix}, \quad \mbox{and} \quad 
\begin{pmatrix}
 i H_0(x) \\
-iH_0(x) \\
\sqrt{f_0(x)}
\end{pmatrix}
\end{align*}
are linearly independent solutions of the equation
\begin{align*}
\Phi_{0x}(x,\infty^+)=\U_0(x,\infty^+)\Phi_{0}(x,\infty^+), \quad x\in (0,1).
\end{align*}
Using that $\mathcal{E}_0(0)=1$, $H_0(0)=0$, and $\Phi_0(0,\infty^+)=I$ yields \eqref{Phi0Phi1atinfinitya}, which proves (ii).

In order to prove (iii), we recall that every differentiable matrix-valued function $B(x)$ with values in $GL(n,\C)$ satisfies the relation
\begin{align*}
\ln(\det B(x))_x = \tr (B^{-1}B_x(x)).
\end{align*}
This implies
\begin{align*}
\ln(\det \Phi_0(x,P))_x = \tr\big(\Phi_0^{-1}(x,P) \U_0(x,P) \Phi_0(x,P)\big) =\tr\big(   \U_0(x,P)\big)= \frac{3}{2}( \ln f_0(x))_x
\end{align*}
for $P\in \Surface_{(x,y)} \setminus (\Sigma_0\cup \Sigma_1)$ and $x$ in a small neighborhood of the origin (since $\Phi_0(0,P)=I$). Hence there exists a constant $c(P)>0$ possibly depending on $P$ such that
\begin{align} \label{detPhi}
\det \Phi_0(x,P) = c(P) f_0(x)^{\frac{3}{2}}
\end{align}
for small $x$. A continuity argument and the positivity of $f_0(x)$ for all $x\in [0,1)$ imply that \eqref{detPhi} can be extended to all $x\in [0,1)$. The fact that $\det \Phi_0(0,P) =1=f_0(0)$ for all $P$ implies that $c(P)=1$ which completes the proof of (iii).

It follows from the relation $\lambda(x,0,k^+)=-\lambda(x,0,k^-)$ that
\begin{align*}
 \U_0 (x,k^+)=\Lambda \U_0(x,k^-) \Lambda.
\end{align*}
Hence uniqueness of the solution of \eqref{phi0phi1defa} yields the first symmetry in \eqref{SymmetriesPhi0Phi1a}. For the second symmetry, we observe that $\overline{\lambda(x,0,\bar{k}^\pm)} = \lambda(x,0,k^\pm)$. This yields
\[
\overline{\U_0(x,\bar{k}^\pm)} = (A_0(x))+\bar{A}_0(x)) I - \Lambda \U_0(x,k^\pm)^t \Lambda =  \frac{f_{0x}(x)}{f_0(x)} I - \Lambda \U_0(x,k^\pm)^t \Lambda.
\]
The matrix-valued function $f_0(x) \Lambda (\Phi_0(x,k^\pm)^{-1})^t \Lambda$ (note that $\Phi_0(x,k^\pm)$ is invertible by \eqref{detPhi0Phi1}) satisfies the same Volterra equation as $ \overline{\Phi_0(x,\bar{k}^\pm)} $, which is
\begin{align*}
\overline{\Phi_0(x,\bar{k}^\pm)} &= I + \int_0^x  \overline{\U_0(x',\bar{k}^\pm)} \, \overline{\Phi_0(x',\bar{k}^\pm)}dx',
\end{align*}
since, by a straightforward calculation, we have
\begin{align*}
	&I + \int_0^x  \overline{\U_0(x',\bar{k}^\pm)} \,f_0(x') \Lambda (\Phi_0(x',k^\pm)^{-1})^t \Lambda dx'
	\\
	=\; &  I + \int_0^x  \left(\frac{f_{0x}(x')}{f_0(x')} I - \Lambda \U_0(x',k^\pm)^t \Lambda \right)\,f_0(x') \Lambda (\Phi_0(x',k^\pm)^{-1})^t \Lambda dx'
	\\
	=\; &  I + [f_0(x') \Lambda (\Phi_0(x',k^\pm)^{-1})^t \Lambda]_{x'=0}^x
	\\
	& \quad+  \int_0^x  f_{0}(x') \Lambda (\Phi_0(x',k^\pm)^{-1} \U_0(x',k^\pm)\Phi_0(x',k^\pm) \Phi_0(x',k^\pm)^{-1})^t \Lambda  dx'
	\\
	& \quad -\int_0^x f_0(x')\Lambda \U_0(x',k^\pm)^t(\Phi_0(x',k^\pm)^{-1})^t \Lambda dx'
=f_0(x) \Lambda (\Phi_0(x,k^\pm)^{-1})^t \Lambda.
	\end{align*}
	Now the second symmetry in \eqref{SymmetriesPhi0Phi1a} follows by uniqueness of the solutions of the Volterra equation \eqref{phi0phi1defa}. This completes the proof of (iv).
	
	For the last part of the lemma, we observe the estimate
\begin{align*}
& \sup_{k \in K} \big|\Phi_0(x_2, k^+) - \Phi_0(x_1, k^+)\big|
= \sup_{k \in K} \bigg|\int_{x_1}^{x_2} (\mathsf{U}_0\Phi_0)(x, k^+) dx\bigg|
\\
& \leq \bigg( \sup_{k \in K} \sup_{x \in [0, x_0)} |x^\alpha \mathsf{U}_0(x,k^+)|\bigg) \sup_{k \in K} \left(\int_{x_1}^{x_2} |x^{-\alpha}\Phi_0(x, k^+)| dx \right) \\
&\leq C \sup_{k \in K} \left(\int_{x_1}^{x_2} |x^{-\alpha}\Phi_0(x, k^+)| dx \right), \qquad x_1, x_2 \in [0, x_0),
\end{align*}
for some constant $C>0$ and the right-hand side of the above equation tends to zero as $x_2 \to x_1$ by \eqref{estimatephi0j}. This shows that \eqref{ktophia} is continuous on $[0,x_0)$. Moreover, it holds that
\begin{align*}
\sup_{k \in K} \bigg| &\frac{\Phi_0(x+h, k^+) - \Phi_0(x,k^+)}{h} - \Phi_{0x}(x, k^+)\bigg|
\\
& \leq \sup_{k \in K} \bigg| \frac{1}{h} \int_x^{x+h} (\mathsf{U}_0\Phi_0)(x', k^+) dx' - \Phi_{0x}(x,k^+)\bigg|
\\
&  \leq \sup_{k \in K} \bigg| (\mathsf{U}_0\Phi_0)(\xi, k^+)  - (\mathsf{U}_0\Phi_0)(x, k^+)\bigg|
\end{align*}
for some $\xi \in (x,x+h)$ and the last expression in the above equation tends to zero as $h\to 0$. Hence the map \eqref{ktophia} is $C^1$ on $(0,x_0)$ and the fact that its derivative satisfies the relation $\Phi_{0x} = \U_0 \Phi_0$ proves all remaining statements. This completes the proof of part (v) and the lemma.
\end{proof}

\subsection{The Riemann--Hilbert problem}
Let $\mathcal{E}_0(x)$, $H_0(x)$, $\E_1(y)$, $H_1(y)$ satisfy the conditions \eqref{AssumptionsBoundary} for some $n \ge 2$. Defining the jump matrix $v(x,y,z)$ on $\Gamma = \Gamma_0 \cup \Gamma_1$ by
\begin{align} \label{jumpdef}
v(x,y,z) = \begin{cases}
\Phi_0\big(x,F_{(x,y)}^{-1}(z)\big), \qquad z \in \Gamma_0, \\
\Phi_1\big(y,F_{(x,y)}^{-1}(z)\big), \qquad z \in \Gamma_1,
\end{cases}
\qquad (x,y) \in D,
\end{align}
we will analyze the Goursat problem for the equations \eqref{ErnstMaxwellEquations} with boundary data 
$\mathcal{E}_0(x)$, $H_0(x)$, $\E_1(y)$, $H_1(y)$ by using the following family of classical RH problems parametrized by $(x,y) \in D$. We seek a $\C^{3\times3}$-valued function $m(x,y,z)$ such that
\begin{align}\label{RHm}
\begin{cases} 
\text{$m(x,y,\cdot)$ is analytic on $\C \setminus\Gamma$},
\\
\text{$m(x, y, \cdot)$ has continuous boundary values $m_+$ and $m_-$ on $\Gamma$},
\\
\text{$m_+(x, y, z) = m_-(x, y, z) v(x, y, z)$ for all $z \in \Gamma$},
\\
\text{$m(x,y,z) = I + O(z^{-1})$ as $z \to \infty$}.
\end{cases} 
\end{align}
Here the functions $m_+(x,y,z)$ and $m_-(x,y,z)$ denote the continuous boundary values of $m(x,y,z)$ as $z$ approaches the contour $\Gamma$ from the left- and right-hand side of $\Gamma$, respectively, according to its orientation. 

Since the jump matrix $v(x,y,z)$ has positive determinant for all $z \in \Gamma$ by \eqref{detPhi0Phi1}, uniqueness of the solution of the RH problem \eqref{RHm} can be proved by standard arguments (cf. e.g. \cite{D1999}) which also imply that $m(x,y,z)$ is invertible for all $z \in \C \setminus \Gamma$, and, in particular,
\begin{align} \label{determinantm}
	\det m(x,y,z) = 1, \quad (x,y) \in D, \quad z \in \Omega_\infty.
\end{align}

\section{Main results} \label{mainresultssec}
Throughout this section, we assume that $\mathcal{E}_0(x)$, $H_0(x)$, $\E_1(y)$, $H_1(y)$ are complex-valued functions satisfying the conditions \eqref{AssumptionsBoundary} for some $n \ge 2$. We define a \emph{$C^n$-solution of the Goursat problem for \eqref{ErnstMaxwellEquations} in $D$ with data $\{ \E_0,H_0, \E_1, H_1 \}$} to be a pair of complex-valued functions $\{ \mathcal{E},H \}$ satisfying
\begin{align*}
\begin{cases}
\E \in C(D) \cap C^n(\Int(D)), 
\\
\text{$\E(x,y)$ and $H(x,y)$ satisfy \eqref{ErnstMaxwellEquations} in $\Int(D)$,}
\\
\text{$x^\alpha \E_x,x^\alpha H_x, y^\alpha \E_y,y^\alpha H_y, x^\alpha y^\alpha \E_{xy}, x^\alpha y^\alpha H_{xy} \in C(D)$ for some $\alpha \in [0,1)$,}
\\
\text{$\E(x,0) = \E_0(x)$ and $H(x,0)=H_0(x)$ for $x \in [0,1)$,}
\\
\text{$\E(0,y) = \E_1(y)$ and $H(0,y)=H_1(y)$ for $y \in [0,1)$,}
\\
\text{$\re \E(x,y) - |H(x,y)|^2 >0$ for $(x,y) \in D$.}
\end{cases}
\end{align*}

Using the notation introduced in Section \ref{laxpairRHsection}, we will now present the three main results of the paper. The following theorem treats uniqueness of a $C^n$-solution of the Goursat problem for \eqref{ErnstMaxwellEquations} and its representation in terms of the RH problem \eqref{RHm}.

\begin{theorem}\label{mainth1}
	The $C^n$-solution $\{ \mathcal{E}, H \}$ of the Goursat problem for \eqref{ErnstMaxwellEquations} with data 
	\\$\{ \E_0,H_0, \E_1, H_1 \}$ is unique, if it exists, and satisfies
	\begin{subequations}\label{EHrecover}
		\begin{align}
		\E(x,y) &= \frac{(m(x,y,0))_{33}+(m(x,y,0))_{11}-(m(x,y,0))_{21} }{(m(x,y,0))_{33}+(m(x,y,0))_{11}+(m(x,y,0))_{21}}, \\ 
		H(x,y) &=\frac{-i(m(x,y,0))_{23}}{(m(x,y,0))_{33}+(m(x,y,0))_{11}+(m(x,y,0))_{21}},
		\end{align}
	\end{subequations}
		where $m(x,y,z)$ is the unique solution of the RH problem \eqref{RHm}. Furthermore, the values $\mathcal{E}(x,y)$ and $H(x,y)$ for some $(x,y) \in D$ depend only on the values $\mathcal{E}_0(x')$, $H_0(x')$, $\E_1(y')$, $H_1(y')$ for $x'\in [0,x]$ and $y'\in [0,y]$.

\end{theorem}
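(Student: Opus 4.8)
The plan is to establish the two parts of Theorem~\ref{mainth1} separately: first the recovery formulas \eqref{EHrecover} assuming existence of a $C^n$-solution, and then uniqueness as a corollary. The central idea is that any $C^n$-solution $\{\E,H\}$ gives rise, via the Lax pair \eqref{LaxPair}, to a simultaneous eigenfunction $\Phi(x,y,P)$ on $\Surface_{(x,y)}$, normalized by $\Phi(0,0,P)=I$; from $\Phi$ I would build a solution of the RH problem \eqref{RHm} and show it coincides (by the uniqueness of the RH solution) with $m$. First I would fix $\{\E,H\}$ and define $\Phi$ as the unique solution of the overdetermined system \eqref{LaxPair} with $\Phi(0,0,\cdot)=I$; the compatibility $\U_y-\V_x+[\U,\V]=0$, which is equivalent to \eqref{ErnstMaxwellEquations}, guarantees that this simultaneous solution exists and is single-valued on $\Int D$. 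By restricting to the boundary $y=0$ and $x=0$ and comparing with the Volterra equations \eqref{phi0phi1def}, I would identify $\Phi(x,0,P)=\Phi_0(x,P)$ and $\Phi(0,y,P)=\Phi_1(y,P)$.

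Next I would pass to the $z$-plane using the biholomorphism $F_{(x,y)}$ and define $M(x,y,z) = \Phi(x,y,F_{(x,y)}^{-1}(z))$. The analyticity in part~(i) of Lemma~\ref{lemma: Propertiesphi0ph1}, together with the normalizations and the determinant/symmetry relations, should show that $M$ is analytic off $\Gamma$ with the expected behavior. The key computation is to track the jump of $M$ across $\Gamma$: because $\Gamma_0$ encircles $F_{(x,y)}(\Sigma_0)$ and $\Gamma_1$ encircles $F_{(x,y)}(\Sigma_1)$, and because $\Phi_0,\Phi_1$ are the boundary restrictions of the same $\Phi$, the ratio of boundary values of $M$ is governed precisely by $v$ in \eqref{jumpdef}. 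After a suitable normalization at $z=\infty$ (dividing by the value of $M$ at $\infty$, i.e.~at $P=\infty^+$, using the explicit formulas \eqref{Phi0Phi1atinfinity}), I would obtain a function $m$ satisfying all four conditions of \eqref{RHm}. Uniqueness of the RH solution then forces this normalized object to equal $m(x,y,z)$.

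The recovery formulas \eqref{EHrecover} would then follow by evaluating $m$ at $z=0$, which corresponds to $\lambda=1$, i.e.~the point $P=\infty^+$ on $\Surface_{(x,y)}$. At that point $\Phi(x,y,\infty^+)$ is computable in closed form by an argument exactly parallel to part~(ii) of Lemma~\ref{lemma: Propertiesphi0ph1}: the three explicit column vectors involving $\E,H,\bar H,\sqrt f$ are linearly independent solutions of the $y$-independent flow $\Phi_x=\U(\cdot,\infty^+)\Phi$ and $\Phi_y=\V(\cdot,\infty^+)\Phi$ at $P=\infty^+$ (where $\lambda=1$), and the normalization $\E(0,0)=1$, $H(0,0)=0$ fixes the constant matrix. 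Reading off the relevant entries $(m)_{11},(m)_{21},(m)_{23},(m)_{33}$ and inverting the normalization gives \eqref{EHrecover}. With the representation established, uniqueness is immediate: two $C^n$-solutions yield the same boundary data, hence the same jump matrix $v$ and the same RH problem \eqref{RHm}, whose solution is unique, so \eqref{EHrecover} returns identical $\{\E,H\}$. The final domain-of-dependence claim follows because $v$ restricted to $\Gamma_0$ depends only on $\Phi_0(x,\cdot)$ and by Lemma~\ref{lemma: uniqueness and existence phi0phi1} (Volterra structure) $\Phi_0(x,\cdot)$ depends only on the data on $[0,x]$, and symmetrically for $\Gamma_1$.

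\textbf{Main obstacle.} The hardest step will be justifying that the normalized eigenfunction actually solves the RH problem \eqref{RHm} with exactly the jump $v$, rather than some conjugate or sheet-swapped version. This requires careful bookkeeping of the two sheets $k^\pm$ of $\Surface_{(x,y)}$ under $F_{(x,y)}$, correct matching of the clockwise orientations of $\Gamma_0,\Gamma_1$ with the orientations of $\Sigma_0,\Sigma_1$, and verifying that the boundary values $\Phi_0,\Phi_1$ glue to $\Phi$ without spurious jumps across the branch cut (using the across-the-cut continuity noted in the proof of Lemma~\ref{lemma: Propertiesphi0ph1}(i)). The symmetries \eqref{SymmetriesPhi0Phi1} and the determinant normalization \eqref{detPhi0Phi1} will be needed to control the behavior at the distinguished points and to rule out extra singularities, so the analytic-continuation argument from $\Phi_0,\Phi_1$ to a single-valued $\Phi$ on $\Int D$ is where the bulk of the technical care lies.
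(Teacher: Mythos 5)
Your overall strategy is the same as the paper's: construct the two-variable eigenfunction $\Phi$ of the Lax pair from the putative solution, identify $\Phi(x,0,\cdot)=\Phi_0$ and $\Phi(0,y,\cdot)=\Phi_1$, build from it a solution of the RH problem \eqref{RHm}, invoke uniqueness of that RH solution, and read off \eqref{EHrecover} at $z=0$; your uniqueness and domain-of-dependence arguments also match the paper. However, the central step of your outline fails as stated. The function $M(x,y,z)=\Phi(x,y,F_{(x,y)}^{-1}(z))$, even after normalization at $z=\infty$, does \emph{not} solve \eqref{RHm}: since $\Phi(x,y,\cdot)$ is analytic on $\Surface_{(x,y)}\setminus(\Sigma_0\cup\Sigma_1)$, the function $M$ has \emph{no} jump across $\Gamma$ at all; its jumps lie across the segments $F_{(x,y)}(\Sigma_0)\subset\Omega_0$ and $F_{(x,y)}(\Sigma_1)\subset\Omega_1$, and they involve the unknown boundary values of $\Phi(x,y,\cdot)$ rather than the data. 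The missing idea is the piecewise definition \eqref{formulamuniqueness}: inside $\Gamma_0$ one must multiply on the right by $\Phi(x,0,F_{(x,y)}^{-1}(z))^{-1}$ and inside $\Gamma_1$ by $\Phi(0,y,F_{(x,y)}^{-1}(z))^{-1}$, which simultaneously produces the jump $v$ on $\Gamma$ and removes the jumps across the cuts. That the products $\Phi\,\Phi(x,0,\cdot)^{-1}$ and $\Phi\,\Phi(0,y,\cdot)^{-1}$ extend analytically across $\Sigma_0$ and $\Sigma_1$ is precisely Lemma \ref{lemma: phiinversephianalytic}, whose proof requires showing that $\Phi$ satisfies \emph{both} Volterra equations \eqref{firstVolterraPhi} and \eqref{secondVolterraPhi} (this is where the PDE enters through the compatibility condition). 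None of this appears in your proposal; indeed your remark about $\Phi_0,\Phi_1$ gluing to $\Phi$ ``without spurious jumps across the branch cut'' suggests you expect $\Phi$ itself to be analytic across the cuts, which is false in general.

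There is also a concrete computational error at the end: under $F_{(x,y)}$, the point $z=0$ corresponds to $\lambda=-1$, i.e.\ $P=\infty^-$, whereas $\lambda=1$ ($P=\infty^+$) corresponds to $z=\infty$, not $z=0$. With your identification one would get $m(x,y,0)=\Phi(x,y,\infty^+)^{-1}\Phi(x,y,\infty^+)=I$, which would force $\E\equiv1$ and $H\equiv0$. The correct evaluation uses the sheet-swap symmetry $\Phi(x,y,k^+)=\Lambda\Phi(x,y,k^-)\Lambda$ to write $m(x,y,0)=\Phi(x,y,\infty^+)^{-1}\Lambda\Phi(x,y,\infty^+)\Lambda$, and then the explicit value \eqref{phiatinfinity} of the two-variable eigenfunction $\Phi(x,y,\infty^+)$ (not the boundary formulas \eqref{Phi0Phi1atinfinity}) to solve for $\E$ and $H$ and obtain \eqref{EHrecover}.
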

 
 The following theorem treats existence of a $C^n$-solution.

\begin{theorem} \label{mainth2}
	Whenever the RH problem \eqref{RHm} has a solution, then there exists a $C^n$-solution of the Goursat problem for \eqref{ErnstMaxwellEquations} in $D$ with data $\{ \E_0,H_0, \E_1, H_1 \}$. Furthermore, for some fixed $\delta >0$, there always exists a $C^n$-solution of the Goursat problem for \eqref{ErnstMaxwellEquations} in the smaller triangle $D_\delta = D \cap \{ (x,y):x+y < 1-\delta \}$ (see Figure \ref{figDdelta}) with data 
	\[ 
	\{ \E_0|_{[0,1-\delta)},H_0|_{[0,1-\delta)}, \E_1|_{[0,1-\delta)}, H_1|_{[0,1-\delta)} \}
	\]
	whenever the $L^1([0,1-\delta))$-norms of $A_0$, $B_1$, $H_0/\sqrt{f_0}$, and $H_1/\sqrt{f_1}$ are sufficiently small.
\end{theorem}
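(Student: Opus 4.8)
The plan is to treat the two assertions separately. For the first, suppose the RH problem \eqref{RHm} has a solution $m(x,y,z)$; by the uniqueness argument recorded after \eqref{RHm} it is unique and satisfies \eqref{determinantm}. The initial task is to show that $(x,y)\mapsto m(x,y,\cdot)$ inherits the regularity of the data. The jump matrix $v(x,y,z)$ is assembled from $\Phi_0$ and $\Phi_1$ composed with $F_{(x,y)}^{-1}$, and Lemma \ref{lemma: Propertiesphi0ph1}(v) controls precisely the continuity and $C^n$-dependence of $k\mapsto\Phi_0(x,k^\pm)$ and $k\mapsto\Phi_1(y,k^\pm)$ on $x,y$, including the weighted maps $x\mapsto x^\alpha\Phi_0$ and $y\mapsto y^\alpha\Phi_1$. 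Combining this with the smooth $(x,y)$-dependence of $F_{(x,y)}$ and the standard theory of parameter-dependence for uniquely solvable RH problems, I would deduce that $m$ is continuous on $D$, is $C^n$ in $\Int D$, and that $x^\alpha m_x$, $y^\alpha m_y$, $x^\alpha y^\alpha m_{xy}$ extend continuously to $D$. Tracking the corner weights $x^\alpha,y^\alpha$ through the reconstruction is where I expect the regularity bookkeeping to demand the most care.

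The heart of the argument is to verify that $\E,H$ defined by \eqref{EHrecover} solve the Lax pair and hence \eqref{ErnstMaxwellEquations}. From $m$ I would build an eigenfunction $\Phi(x,y,P)$ on $\Surface_{(x,y)}$ by multiplying $m(x,y,F_{(x,y)}(P))$ by an explicit normalization matrix, fixed so that $\Phi$ matches the boundary eigenfunctions at $\infty^+$ via Lemma \ref{lemma: Propertiesphi0ph1}(ii). Differentiating the jump relation $m_+ = m_- v$ in $x$ and $y$ and using $\partial_x\Phi_0 = \U_0\Phi_0$ on $\Gamma_0$ together with $\partial_y\Phi_1 = \V_1\Phi_1$ on $\Gamma_1$, the logarithmic derivatives $\Phi_x\Phi^{-1}$ and $\Phi_y\Phi^{-1}$ acquire no jump across $\Gamma$ and extend to rational functions on $\Surface_{(x,y)}$ whose only singularities sit at the points where $\lambda\in\{0,\infty\}$, i.e.\ the images $z=\pm1$ of the branch points. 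Reading off their principal parts and matching against the explicit $\lambda$-dependence of $\U$ and $\V$ identifies these derivatives with $\U,\V$ evaluated at the reconstructed $\E,H$, after which the compatibility condition $\U_y-\V_x+[\U,\V]=0$ produces \eqref{ErnstMaxwellEquations}. I expect this matching to be the main obstacle: in the $3\times3$ setting the algebra relating the entries of $m(x,y,0)$ to $A$, $B$, $H_x$, $H_y$ is considerably more involved than the $2\times2$ case of \cite{LMernst}.

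It remains to check the boundary and positivity conditions. When $x=0$ the Volterra equation \eqref{phi0phi1defa} gives $\Phi_0(0,\cdot)=I$, so the jump on $\Gamma_0$ trivializes and $m$ is determined by $\Phi_1$ alone; comparing the reconstruction \eqref{EHrecover} with Lemma \ref{lemma: Propertiesphi0ph1}(ii) then yields $\E(0,y)=\E_1(y)$, $H(0,y)=H_1(y)$, and symmetrically $\E(x,0)=\E_0(x)$, $H(x,0)=H_0(x)$ at $y=0$; at the origin $m\equiv I$ returns $\E(0,0)=1$, $H(0,0)=0$. For positivity, the symmetry and determinant relations of Lemma \ref{lemma: Propertiesphi0ph1}(iii)--(iv) propagate from $\Phi_0,\Phi_1$ through $v$ to $m$, forcing $f=\re\E-|H|^2$ to coincide with a manifestly positive expression in the entries of $m$; this gives $\re\E-|H|^2>0$ throughout $D$ and completes the first assertion.

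For the second assertion I would invoke small-norm RH theory on $D_\delta$. The constraint $x+y<1-\delta$ keeps the branch points $x$ and $1-y$ separated by at least $\delta$, so a single contour $\Gamma$ serves all $(x,y)\in D_\delta$ and the values of $\lambda$ on $\Gamma$ stay uniformly bounded. When the $L^1([0,1-\delta))$-norms of $A_0$, $B_1$, $H_0/\sqrt{f_0}$, $H_1/\sqrt{f_1}$ are small, the relation $f_{0x}/f_0=2\re A_0$ keeps $f_0,f_1$ close to $1$, and an integration by parts converts the entries of $\int_0^x\U_0$ involving $H_{0x}/\sqrt{f_0}$ into boundary terms controlled by $H_0/\sqrt{f_0}$; hence $\int_0^x\U_0$ and $\int_0^y\V_1$ are uniformly small, and the Volterra estimate \eqref{estimatephi0j} makes $\Phi_0-I$ and $\Phi_1-I$, and thus the jump $v-I$, uniformly small on $\Gamma$. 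The standard small-norm solvability theorem then produces $m$, and the first assertion upgrades it to a $C^n$-solution of the Goursat problem in $D_\delta$.
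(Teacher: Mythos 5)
Your proposal follows essentially the same route as the paper: $(x,y)$-regularity of $m$ through the Cauchy-operator representation (the paper's Lemma \ref{lemma: proeprties of m}), reconstruction of $\{\E,H\}$ from $m(x,y,0)$, a Liouville-type argument turning the differentiated jump relation into the Lax pair (Lemma \ref{lemma: Phi satisfies lax pair}; note your ``no jump'' claim is literally true only for the log-derivative of the eigenfunction \emph{extended} by $\Phi_0$, $\Phi_1$ into $\Omega_0$, $\Omega_1$ --- this is exactly the paper's passage from $\mathfrak{f}$ to $\mathfrak{n}$), compatibility yielding \eqref{ErnstMaxwellEquations}, boundary matching via explicit solutions plus uniqueness, and a Neumann series for the second assertion (Lemma \ref{lemma: boundary data small}). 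The genuine gap is in that last step. The Volterra estimate \eqref{estimatephi0j} bounds $\Phi_0-I$ by $\lVert\U_0\rVert_{L^1}$, i.e.\ by $\int_0^x|\U_0(x',k^\pm)|\,dx'$, and this quantity contains $\lVert H_{0x}/\sqrt{f_0}\rVert_{L^1}$. Your integration by parts controls only the \emph{signed} integral $\int_0^x\U_0\,dx'$, which cannot be inserted into \eqref{estimatephi0j}; worse, its boundary term is $H_0(x)/\sqrt{f_0(x)}$ evaluated pointwise, hence controlled by the $L^\infty$ norm of $H_0/\sqrt{f_0}$, not by its $L^1$ norm. Concretely, take $H_0$ real, a smooth bump of fixed height $M$ and width $w\to0$ centered at $x_0$, with $\E_0=1+H_0^2$, $\E_1\equiv1$, $H_1\equiv0$. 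Then \eqref{AssumptionsBoundary} holds, $A_0\equiv0$, $B_1\equiv0$, $f_0\equiv1$, and $\lVert H_0/\sqrt{f_0}\rVert_{L^1}=O(Mw)$ is arbitrarily small; yet on the bump $\U_0=iH_{0x}N$ with $N$ slowly varying and nonzero eigenvalues $\pm\sqrt{(1-x)/(k-x)}$, so $\Phi_0(x_0,k^\pm)\approx\exp\bigl(iM\,N(x_0,k^\pm)\bigr)$ is far from $I$. Thus $w=v-I$ is \emph{not} uniformly small and the Neumann series does not close. The paper's Lemma \ref{lemma: boundary data small} applies \eqref{estimatephi0j} directly, i.e.\ it reads the smallness hypothesis as controlling the $L^1$ norms of the quantities that actually enter \eqref{DefUV} --- the derivative terms $H_{0x}/\sqrt{f_0}$, $H_{1y}/\sqrt{f_1}$, on the same footing as $A_0$ and $B_1$ --- and with that reading the step is immediate; your attempted bridge from $H_0/\sqrt{f_0}$ is the step that fails.

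Two further places where your sketch omits arguments the paper cannot avoid. First, for the first assertion, ``standard parameter dependence'' of RH solutions presupposes a contour $\Gamma$ fixed while $(x,y)$ varies, and no single admissible contour serves all of $D$: as $x+y\to1$ the sets $F_{(x,y)}(\Sigma_0)$ and $F_{(x,y)}(\Sigma_1)$ approach $0$ and $\infty$. The paper fixes a contour only on $D_\delta$ (chosen invariant under $z\mapsto z^{-1}$ and $z\mapsto\bar z$), proves everything there, and then lets $\delta\downarrow0$; your argument needs the same exhaustion. Second, positivity of $f$ and, before that, the well-definedness of $\E$ and $H$ as continuous functions require more than ``propagating the symmetries'': the symmetries only give $(\hat m)_{22}\in(-\infty,-1]\cup[1,\infty)$ together with the formula \eqref{expressionf}, so one must (a) fix the sign $(\hat m)_{22}\ge1$ by continuity on $D_\delta$ from $m(0,0,\cdot)=I$, and (b) show the denominators in \eqref{EHrecover} never vanish, which the paper does by a contradiction argument built on the adjugate identities \eqref{symmetriesm0}. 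Neither step is visible in your outline, and without them $f>0$ and the claimed regularity of $\{\E,H\}$ are not established.
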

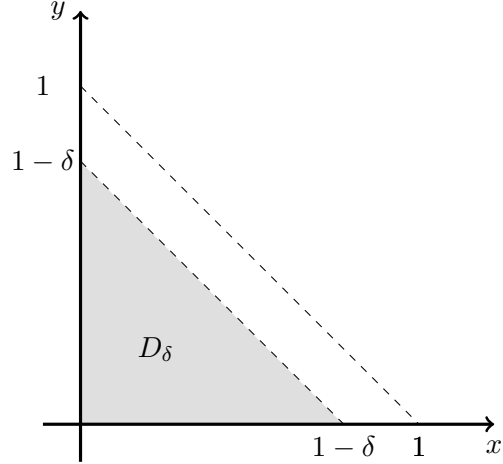
\begin{figure}
	\begin{center}
		\begin{tikzpicture}	
		\draw[white,shade, inner color = gray!25, outer color = gray!25] (-1.5,0) -- (-1.5,3.5) -- (2,0) -- (-1.5,0) ;
		\draw[very thick,->] (-2,0) -- (4,0);
		\draw[very thick,->] (-1.5,-0.5) -- (-1.5,5.5);
		\node at (4,-0.3) {$x$};
		\node at (-1.8,5.5) {$y$};
		
		\draw[dashed] (-1.5,4.5) -- (3,0);
		\draw[dashed] (-1.5,3.5) -- (2,0);
		\node at (-2,4.5) {$1$};
		\node at (3,-0.3) {$1$};
			\node at (-2,3.5) {$1-\delta$};
		\node at (2,-0.3) {$1-\delta$};
		\node at (-0.5,1) {$D_\delta$};

		\node at (3,-0.3) {$1$};		
		\end{tikzpicture}
		\caption{The slightly smaller triangle $D_\delta$.} \label{figDdelta}
	\end{center}
\end{figure}
 The following theorem treats the boundary values of a $C^n$-solution.
 
\begin{theorem}\label{mainth3}
	Assume that $\alpha \in (0,1)$ and that $\{ \mathcal{E},H \}$ is a $C^n$-solution of the Goursat problem for \eqref{ErnstMaxwellEquations} in $D$ with data $\{ \E_0,H_0, \E_1, H_1 \}$ and let
	\begin{align*}
		m_1&= \lim_{x \downarrow 0} x^\alpha \mathcal{E}_{0x}(x), & n_1&= \lim_{x \downarrow 0} x^\alpha H_{0x}(x),
		\\
		m_2&= \lim_{y \downarrow 0} y^\alpha \mathcal{E}_{1y}(y), & n_2&= \lim_{y \downarrow 0} y^\alpha H_{1y}(y).
	\end{align*}
	Then we have
	\begin{subequations}\label{boundaryconditionsEH}
	\begin{align}
			\lim_{x \downarrow 0} x^\alpha \mathcal{E}_x(x,y) &= c_1\frac{(im_1\bar{d_1} +2\bar{e}_1n_1)\bar{H_1}+c_1(e_1m_1+2id_1n_1) }{\sqrt{f_1}\sqrt{1-y}}, \label{boundaryconditionsEH1}
			\\
			\lim_{y\downarrow 0} y^\alpha \mathcal{E}_y(x,y) &= c_2\frac{(im_2\bar{d_2} +2\bar{e}_2n_2)\bar{H_0}+c_2(e_2m_2+2id_2n_2) }{\sqrt{f_0}\sqrt{1-x}},\label{boundaryconditionsEH2}
			\\
		\lim_{x \downarrow 0} x^\alpha H_x(x,y)	&= c_1\frac{(im_1\bar{d}_1 +2\bar{e}_1n_1) }{2\sqrt{f_1}\sqrt{1-y}},\label{boundaryconditionsEH3}
		\\
		\lim_{y \downarrow 0} y^\alpha H_y(x,y)	&= c_2\frac{(im_2\bar{d}_2 +2\bar{e}_2n_2) }{2\sqrt{f_0}\sqrt{1-x}},\label{boundaryconditionsEH4}
	\end{align}
\end{subequations}
where, if $(x,y) \in D$,
\begin{subequations}
\begin{align}
c_1(y)&=(\Phi_1(y, 0))_{22}=e^{\int_0^y B_1(y')dy'},  &d_1(y) &= (\Phi_1(y, 0))_{31}, & e_1(y) &= (\Phi_1(y, 0))_{33},
\\
c_2(x)&=(\Phi_0(x, 0))_{22}=e^{\int_0^x A_0(x')dx'}, \hspace{-0.1cm} &d_2(x) &= (\Phi_0(x, 0))_{31}, & e_2(x) &=  (\Phi_0(x, 0))_{33}. 
\end{align}
\end{subequations}
Moreover, if $\alpha=1/2$, it holds that
\begin{subequations}\label{boundaryrelevantforwaves}
\begin{align}
&(1-y)	\lim_{x\to 0} x \left( \frac{|\mathcal{E}_x(x,y)-2\bar{H}_1(y)H_x(x,y)|^2}{f_1(y)^2} + \frac{4 |H_x(x,y)|^2}{f_1(y)} \right)=|m_1|^2+4|n_1|^2,\label{boundaryrelevantforwavesa}
\\
&(1-x)	\lim_{y\to 0} y  \left( \frac{|\mathcal{E}_y(x,y)-2\bar{H}_0(x)H_y(x,y)|^2}{f_0(x)^2} + \frac{4 |H_y(x,y)|^2}{f_0(x)} \right)=|m_2|^2+4|n_2|^2.\label{boundaryrelevantforwavesb}
\end{align}
\end{subequations}
\end{theorem}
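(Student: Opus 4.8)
The plan is to derive the rescaled-derivative formulas \eqref{boundaryconditionsEH} from the representation \eqref{EHrecover} by controlling the RH solution $m(x,y,z)$ near $z=0$ as $x\downarrow0$, and then to deduce \eqref{boundaryrelevantforwaves} for $\alpha=1/2$ as a purely algebraic consequence. First I would differentiate the rational expressions \eqref{EHrecover} in $x$, so that $\E_x$ and $H_x$ are written through the entries of $m(x,y,0)$ and of $m_x(x,y,0)$. Since $\E$ and $H$ stay bounded up to the boundary while $\E_x,H_x\sim x^{-\alpha}$, the task reduces to computing $\lim_{x\downarrow0}m(x,y,0)$ and $\lim_{x\downarrow0}x^\alpha m_x(x,y,0)$.

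The first limit is governed by the reduced RH problem obtained as $x\downarrow0$. By \eqref{phi0phi1def} one has $\Phi_0(0,\cdot)=I$, so the jump of \eqref{RHm} on $\Gamma_0$ tends to the identity, $\Omega_0$ merges with $\Omega_\infty$, and $m$ converges to the solution carrying only the $\Gamma_1$-jump $\Phi_1(y,\cdot)$; solving this reduced problem expresses $m(0,y,\cdot)$ through $\Phi_1(y,\cdot)$ and recovers $\E(0,y)=\E_1(y)$, $H(0,y)=H_1(y)$. I expect the $x$-dependence, and hence $m_x$, to localize at the image $F_{(x,y)}(\Sigma_0)$ of the collapsing contour, which shrinks to the branch point $k=x\to0$ that $F_{(x,y)}$ sends to $z=-1$; this is precisely why the entries of $\Phi_1$ evaluated at $k=0$, namely $c_1,d_1,e_1$, enter the final answer.

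For the second limit I would linearise \eqref{RHm} about $x=0$. The only $x$-dependent jump sits on $\Gamma_0$, where $v=\Phi_0\big(x,F_{(x,y)}^{-1}(z)\big)$; by \eqref{phi0phi1def} this is a near-identity perturbation with $\partial_x v\approx\U_0\big(x,F_{(x,y)}^{-1}(z)\big)$, and by \eqref{AssumptionsBoundary}, \eqref{DefUV} together with $f_0(0)=1$ and $H_0(0)=0$ the rescaled matrix $x^\alpha\U_0$ converges to an explicit limit built from $m_1$ (via $x^\alpha A_0\to m_1/2$) and $n_1$ (via $x^\alpha H_{0x}/\sqrt{f_0}\to n_1$). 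Feeding this into the Cauchy-integral formula for the derivative of an RH solution with respect to a jump parameter, dressed by the reduced solution $m(0,y,\cdot)\sim\Phi_1(y,\cdot)$, produces $\lim_{x\downarrow0}x^\alpha m_x(x,y,0)$ and hence \eqref{boundaryconditionsEH}; the factor $\sqrt{1-y}$ should emerge from $\lambda^2=(1-y)/x$ at $k=0$ by \eqref{lambdadef}, and $\sqrt{f_1}$ from the $1/\sqrt{f_0}$-structure of $\U_0$. I expect this localization — extracting the exact constant, including both the $c_1,d_1,e_1$ dressing and the $\sqrt{1-y}/\sqrt{f_1}$ factors, while controlling the mismatch between the surfaces $\Surface_{(x',0)}$ appearing in $\U_0$ and $\Surface_{(x,y)}$ appearing in $F_{(x,y)}^{-1}$ — to be the main obstacle; the remaining $y$-boundary formulas follow symmetrically.

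Finally, for \eqref{boundaryrelevantforwaves} with $\alpha=1/2$ I would substitute \eqref{boundaryconditionsEH1} and \eqref{boundaryconditionsEH3} into the quadratic boundary functional, using $x\,|w|^2=|x^{1/2}w|^2$ to replace $x\,|\cdot|^2$ by $|\lim x^{1/2}(\cdot)|^2$. A short computation shows that the $\bar H_1$-terms cancel in $x^{1/2}(\E_x-2\bar H_1H_x)$, leaving $c_1^2(e_1m_1+2id_1n_1)/(\sqrt{f_1}\sqrt{1-y})$; after squaring, the cross terms cancel and the functional collapses to $(|d_1|^2+|e_1|^2)(|m_1|^2+4|n_1|^2)/f_1$, using $|c_1|^2=f_1$, which follows from $c_1=e^{\int_0^yB_1}$ and $\re B_1=\tfrac12(\ln f_1)_y$. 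The proof is then closed by the identity $|d_1|^2+|e_1|^2=f_1$: at $k=0$ the factors $\lambda^{-1}$ in $\V_1$ vanish, the second row and column decouple so that $(\Phi_1(y,0))_{32}=0$, and the pseudo-unitarity relation $\Phi_1(y,0)\,\Lambda\,\Phi_1(y,0)^\dagger=f_1\Lambda$ contained in \eqref{SymmetriesPhi0Phi1} gives $|d_1|^2+|e_1|^2=f_1$ from its $(3,3)$-entry. Statement \eqref{boundaryrelevantforwavesb} is obtained in the same way with $\Phi_0$ in place of $\Phi_1$.
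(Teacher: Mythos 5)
Your proposal is correct and follows essentially the same route as the paper's proof: differentiate \eqref{EHrecover}, compute $\lim_{x\downarrow 0}x^\alpha m_x(x,y,0)$ by perturbing the RH problem in its $\Gamma_0$-jump with $x^\alpha \U_0 \to$ an explicit matrix in $m_1,n_1$ and residues localizing at $z=-1$ dressed by the reduced solution built from $\Phi_1$ (this is precisely the paper's computation $m_x=\mathcal{C}(\mu_x w)+\mathcal{C}(\mu w_x)$ leading to \eqref{formulaxalphamx}), then evaluate $\Phi_1(y,0)$ and its symmetries (the paper's Lemma \ref{lemmaPhi10}), and finally the same algebraic cancellations for $\alpha=1/2$. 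Your identity $|d_1|^2+|e_1|^2=f_1$, obtained from the pseudo-unitarity relation at $k=0$, is equivalent to the paper's relations \eqref{symmetriesPhi1(0,y)} and correctly closes the argument.
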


\section{Proofs of the main results} \label{proofssec}
In this section, we prove the three main results Theorems \ref{mainth1}-\ref{mainth3}. We note that the proofs are conceptually similar to those in \cite{LMernst}. In particular, some parts of the proofs rather rely on the analyticity structure of the matrices $\U$ and $\V$ than on their specific expression. These parts require almost no modification compared to \cite{LMernst}. However, for the purpose of completeness and the reader's convenience, we include all details of the proof. 

Throughout this section, we assume that $\E_0$, $H_0$,  $\E_1$, $H_1$ are complex-valued functions satisfying the assumptions \eqref{AssumptionsBoundary} for some $n\ge2$.

\subsection{Proof of Theorem \ref{mainth1}} Let $\{ \mathcal{E}, H \}$ be a $C^n$-solution of the Goursat problem for \eqref{ErnstMaxwellEquations} with data $\{ \E_0,H_0, \E_1, H_1 \}$. For $(x,y) \in D$, we define the eigenfunction $\Phi(x,y,k^\pm)$ of the Lax pair \eqref{LaxPair} to be the solution of the Volterra equation
\begin{align}\label{firstVolterraPhi}
\Phi(x,y,k^\pm) = \Phi_0(x,k^\pm) + \int_{0}^{y} \V(x,y',k^\pm) \Phi(x,y',k^\pm)dy', \quad k \in \hat{\C} \setminus [0,1].
\end{align}
The same arguments as in the proof of Lemma \ref{lemma: uniqueness and existence phi0phi1} show that $\Phi(x,y,k^\pm)$ exists and is the unique solution of \eqref{firstVolterraPhi} such that the function $(x,y) \mapsto \Phi(x,y,k^\pm)$ is in $C(D)\cap C^n(\Int D)$. Moreover, in the same way as in the proof of Lemma \ref{lemma: Propertiesphi0ph1}, the eigenfunction $P \mapsto \Phi(x,y,P)$ extends to an analytic function on $\Surface_{(x,y)} \setminus (\Sigma_0 \cup \Sigma_1)$. The following Lemma is a key ingredient of the proof of Theorem \ref{mainth1}.
\begin{lemma} \label{lemma: phiinversephianalytic}
Let $\Phi$ be the unique solution of the Volterra equation \eqref{firstVolterraPhi}. Then the functions
\[
P \mapsto \Phi(x,y,P)\Phi(x,0,P)^{-1} \text{ and } P \mapsto \Phi(x,y,P)\Phi(0,y,P)^{-1}
\]
extend to analytic functions on $\Surface_{(x,y)} \setminus \Sigma_1$ and $\Surface_{(x,y)} \setminus \Sigma_0$, respectively.
\end{lemma}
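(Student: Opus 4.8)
The plan is to handle both products by the same mechanism: reducing each to a Volterra equation whose integral kernel is analytic across one of the two contours. I carry out the argument for $M(x,y,P):=\Phi(x,y,P)\Phi(x,0,P)^{-1}$ and treat the second product symmetrically, interchanging $x\leftrightarrow y$, $\U\leftrightarrow\V$, and $\Sigma_0\leftrightarrow\Sigma_1$. Setting $y=0$ in \eqref{firstVolterraPhi} kills the integral term, so $\Phi(x,0,P)=\Phi_0(x,P)$, which is invertible wherever it is defined because $\det\Phi_0(x,P)=f_0(x)^{3/2}>0$ by \eqref{detPhi0Phi1}; thus $M$ is a priori analytic on $\Surface_{(x,y)}\setminus(\Sigma_0\cup\Sigma_1)$. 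Differentiating \eqref{firstVolterraPhi} in $y$ gives $\Phi_y=\V\Phi$, whence
\[
M_y=\V(x,y,P)\,M,\qquad M(x,0,P)=I,
\]
i.e. $M(x,y,P)=I+\int_0^y\V(x,y',P)M(x,y',P)\,dy'$. The whole point is that this representation no longer contains $\Phi_0$, so the $\Sigma_0$-singularities carried separately by $\Phi(x,y,\cdot)$ and by $\Phi_0(x,\cdot)^{-1}$ must cancel in $M$.

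To exploit this I would analyze the spectral dependence of the kernel. The matrix $\V$ involves the spectral variable only through $\lambda^{-1}$. For fixed $y'\in[0,y]$ one has $\lambda(x,y',\cdot)^2=\lambda(x,y,\cdot)^2\,\frac{k-(1-y')}{k-(1-y)}$, and the second factor has a single-valued analytic square root off the segment $[1-y,1-y']\subset[1-y,1]$; since $\lambda(x,y,\cdot)$ is single-valued on all of $\Surface_{(x,y)}$, it follows that $\lambda(x,y',\cdot)$ is single-valued and analytic on $\Surface_{(x,y)}$ away from the lifts of $[1-y,1]$, with zeros only over $k=1-y'$. As all of these exceptional points lie on $\Sigma_1$, the kernel $\V(x,y',\cdot)$ is analytic on $\Surface_{(x,y)}\setminus\Sigma_1$ and bounded there uniformly in $y'\in[0,y]$ on compact subsets. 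Feeding this into the successive-approximation scheme of Lemma \ref{lemma: uniqueness and existence phi0phi1}, with $\V$ in place of $\U_0$, produces a Neumann series for $M$ whose terms are analytic on $\Surface_{(x,y)}\setminus\Sigma_1$ and which converges absolutely and uniformly on compact subsets reaching up to $\Sigma_1$. By uniqueness of the Volterra solution it agrees with $M$ on $\Surface_{(x,y)}\setminus(\Sigma_0\cup\Sigma_1)$, and hence furnishes the desired analytic extension of $M$ to $\Surface_{(x,y)}\setminus\Sigma_1$.

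For $N(x,y,P):=\Phi(x,y,P)\Phi(0,y,P)^{-1}$ I would first identify $\Phi(0,y,P)=\Phi_1(y,P)$: evaluating \eqref{firstVolterraPhi} at $x=0$ and using $\Phi_0(0,\cdot)=I$ together with $\V(0,y',\cdot)=\V_1(y',\cdot)$ (which holds since the solution restricts to the data $\E_1,H_1$ along $x=0$) shows that $\Phi(0,y,\cdot)$ solves the same Volterra equation as $\Phi_1$, and $\det\Phi_1(y,P)=f_1(y)^{3/2}>0$ gives invertibility. Now $N_x=\U N$ with $N(0,y,P)=I$, so $N=I+\int_0^x\U(x',y,P)N(x',y,P)\,dx'$; since $\U$ involves the spectral variable only through $\lambda$ rather than $\lambda^{-1}$, the kernel $\U(x',y,\cdot)$ is analytic on $\Surface_{(x,y)}\setminus\Sigma_0$, its singularities lying over $k=x'\in[0,x]$, and the identical successive-approximation argument yields the analytic extension of $N$ to $\Surface_{(x,y)}\setminus\Sigma_0$.

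The step I expect to require the most care is the spectral bookkeeping in the second paragraph: one must verify that $\lambda(x,y',\cdot)$, whose intrinsic branch points $x$ and $1-y'$ do not coincide with the branch points $x$ and $1-y$ of $\Surface_{(x,y)}$, is genuinely single-valued on $\Surface_{(x,y)}\setminus\Sigma_1$ with the sheet assignment consistent with the $k^\pm$ convention, and that the resulting bounds on $\V(x,y',\cdot)$ are uniform in $y'$ up to the contour $\Sigma_1$, so that the Neumann series still converges on compact sets abutting $\Sigma_1$ — precisely the refinement of Lemma \ref{lemma: uniqueness and existence phi0phi1} that permits the compact set $K$ to reach the branch cut. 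Granting this, the remaining estimates are a verbatim repetition of those already carried out.
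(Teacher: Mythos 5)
Your treatment of the first product $\Phi(x,y,P)\Phi(x,0,P)^{-1}$ is correct and is essentially the paper's own argument: multiplying \eqref{firstVolterraPhi} on the right by $\Phi_0(x,P)^{-1}$ yields a Volterra equation in $y$ whose kernel $\V(x,y',\cdot)$ involves the spectral variable only through $\lambda^{-1}$, hence is analytic on $\Surface_{(x,y)}\setminus\Sigma_1$, and the successive-approximation solution furnishes the analytic extension; your spectral bookkeeping for $\lambda(x,y',\cdot)$ is a more detailed version of what the paper asserts in one sentence.

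The second half, however, has a genuine gap, and it sits exactly at the step where the hypothesis that $\{\E,H\}$ solves \eqref{ErnstMaxwellEquations} must enter. You write ``Now $N_x=\U N$ with $N(0,y,P)=I$'' and proceed as if this were symmetric to the first case, but it is not: $\Phi$ is defined by \eqref{firstVolterraPhi}, an integral equation in $y$ only, so differentiation gives $\Phi_y=\V\Phi$ for free, whereas $\Phi_x=\U\Phi$ is a nontrivial claim that you never justify. Differentiating \eqref{firstVolterraPhi} in $x$ gives $\Phi_x=\Phi_{0x}+\int_0^y(\V_x\Phi+\V\Phi_x)\,dy'$, and to identify this with $\U\Phi$ one must invoke the zero-curvature relation $\V_x=\U_y+\U\V-\V\U$ of \eqref{UVcompactible} (which is equivalent to the PDE) together with $\Phi_{0x}=\U(x,0,\cdot)\Phi_0$, and then run a homogeneous-Volterra uniqueness argument showing that $\Psi:=\Phi_x-\U\Phi$ satisfies $\Psi=\int_0^y\V\Psi\,dy'$ and hence vanishes. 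This is precisely the bulk of the paper's proof, which only then establishes the second Volterra equation \eqref{secondVolterraPhi} and concludes as in the first case. The omission cannot be repaired by more careful spectral bookkeeping: for generic $\E,H$ not solving the equations, the function $\Phi$ defined by \eqref{firstVolterraPhi} does not satisfy $\Phi_x=\U\Phi$, and the second product genuinely fails to extend across $\Sigma_0$. (Your identification $\Phi(0,y,P)=\Phi_1(y,P)$ via uniqueness of the Volterra solution is fine.)
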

{\it Proof of Lemma \ref{lemma: phiinversephianalytic}}.
Let $P \in \Surface_{(x,y)} \setminus (\Sigma_0 \cup \Sigma_1)$. First we note that $\Phi(x,0,P)= \Phi_0(x,P)$ and hence $\Phi(x,0,P)$ is defined for $P \in \Surface_{(x,y)} \setminus (\Sigma_0 \cup \Sigma_1)$ by Lemma \ref{lemma: proeprties of m} (i) and invertible by \eqref{detPhi0Phi1}. We multiply \eqref{firstVolterraPhi} by $\Phi(x,0,P)^{-1}$ from the right which yields the Volterra equation
\begin{align} \label{Voterraforphiphiinverse}
\Phi(x,y,P)\Phi(x,0,P)^{-1}  = I + \int_{0}^{y} \V(x,y',P) \Phi(x,y',P)\Phi(x,0,P)^{-1}dy'
\end{align}
for $(x,y) \in D$. Since the function $P\mapsto \lambda^{-1}(x,y',P)$ is analytic on $\Surface_{(x,y)} \setminus \Sigma_1$, so is the function $P\mapsto \V(x,y',P)$. Consequently, the solution of \eqref{Voterraforphiphiinverse}, given by
\[
P \mapsto \Phi(x,y,P)\Phi(x,0,P)^{-1},
\]  
is analytic on $\Surface_{(x,y)} \setminus \Sigma_1$.

In order to prove the analogous statement for the function
\[
P \mapsto \Phi(x,y,P)\Phi(0,y,P)^{-1},
\]
it is clearly enough to show that the solution of \eqref{firstVolterraPhi} also satisfies the Volterra equation
\begin{align}\label{secondVolterraPhi}
\Phi(x,y,k^\pm) = \Phi_1(y,k^\pm) + \int_{0}^{x} \U(x',y,k^\pm) \Phi(x',y,k^\pm)dx', 
\end{align}
where $(x,y)\in D$ and $k \in \hat{\C}\setminus [0,1]$. In order to show this we first prove that $\Phi(x,y,k^\pm)$ satisfies the Lax pair equations \eqref{LaxPair}. It holds by \eqref{firstVolterraPhi} that $\Phi_y = \V \Phi$. The fact that $\{ \mathcal{E},H \}$ is a solution of the Goursat problem for \eqref{ErnstMaxwellEquations} implies that 
\begin{align}\label{UVcompactible}
\V_x = \U_y + \U \V - \V \U,
\end{align}
which is the compatibility condition of the Lax pair \eqref{LaxPair}. Furthermore, we have
\begin{align}\label{phi0satisfiesLaxpair}
\Phi_x(x,0,k^\pm) = \U(x,0,k^\pm) \Phi(x,0,k^\pm)
\end{align}
by \eqref{phi0phi1defa}. Differentiating \eqref{firstVolterraPhi} with respect to $x$ and substituting in \eqref{UVcompactible} and \eqref{phi0satisfiesLaxpair} yields
\begin{align*}
\Phi_x(x,y,k^\pm)&= \Phi_x(x,0,k^\pm) + \int_0^y (\V_x \Phi)(x,y',k^\pm) + (\V \Phi_x)(x,y',k^\pm) dy'
\\
&=  \U(x,0,k^\pm) \Phi(x,0,k^\pm) +  \int_{0}^{y} (\U_y \Phi)(x,y',k^\pm) + (\U \V \Phi) (x,y',k^\pm) dy'
\\
&\quad+ \int_{0}^{y} (\V \Phi_x)(x,y',k^\pm) - (\V \U \Phi)(x,y',k^\pm) dy'
\\
 &=\U(x,y,k^\pm) \Phi(x,y,k^\pm) + \int_{0}^{y} (\V \Phi_x)(x,y',k^\pm) - (\V \U \Phi)(x,y',k^\pm) dy',
\end{align*}
where we used
\begin{align*}
\U_y \Phi+ \U \V \Phi  = \U_y \Phi+ \U\Phi_y= (\U \Phi)_y.
\end{align*}
Consequently, $\Phi_x-\U \Phi$ is the unique solution of the homogeneous Volterra equation
\begin{align*}
\tilde{\Phi}(x,y,k^\pm) = \int_{0}^{y} (\V \Phi)(x,y',k^\pm)dy',
\end{align*}
which implies $\Phi_x = \U \Phi$. This proves that $\Phi$ satisfies the Lax pair equations \eqref{LaxPair}. Now the difference between \eqref{firstVolterraPhi} and \eqref{secondVolterraPhi} is given by
\begin{align*}
&\Phi_0(x,k^+) - \Phi_1(y,k^+) +\int_0^y \V \Phi(x,y',k^+)dy' - \int_0^x \U \Phi(x',y,k^+)dx' 
\\
=& \, \int_0^y \int_0^x (\V \Phi)_x(x',y',k^+)dx' dy' - \int_0^x \int_0^y (\U \Phi)_y(x',y',k^+)dy'dx' 
\\
=& \, \int_0^x \int_0^y (\V \Phi)_x(x',y',k^+)-(\U \Phi)_y(x',y',k^+) dy' dx'.
\end{align*}
But $(\V\Phi)_x = (\U \Phi)_y$ is precisely the compatibility condition for the Lax pair \eqref{LaxPair}. Hence we showed that $\Phi$ satisfies \eqref{secondVolterraPhi} and the same argument as for $\Phi(x,y,P)\Phi(x,0,P)^{-1} $ can be applied to show that $P \mapsto \Phi(x,y,P)\Phi(0,y,P)^{-1} $ extends to an analytic function on $\Surface_{(x,y)} \setminus \Sigma_0$. \proofendcontinue

Lemma \ref{lemma: phiinversephianalytic} shows that the function $m(x,y,z)$ defined by
\begin{align} \label{formulamuniqueness}
m(x,y,z)=\Phi(x,y,\infty^+)^{-1} \Phi(x,y,F_{(x,y)}^{-1}(z)) \times
\begin{cases}
\Phi(x,0,F_{(x,y)}^{-1}(z))^{-1}, &z \in \Omega_{0}, \\
\Phi(0,y,F_{(x,y)}^{-1}(z))^{-1}, &z \in \Omega_{1}, \\
I, &z \in \Omega_\infty,
\end{cases}
\end{align}
solves the RH problem \eqref{RHm}.

It is similar to the proof of Lemma \ref{lemma: Propertiesphi0ph1} part (ii) and (iv) to show that the eigenfunction $\Phi$ enjoys the symmetries
\begin{align}\label{phisymmetries}
\Phi(x,y,k^+) = \Lambda \Phi(x, y, k^-)\Lambda , \qquad \overline{\Phi(x,y,\bar{k}^\pm)} =  f(x,y) \Lambda  (\Phi(x,y,k^\pm)^{-1})^t \Lambda,
\end{align}
for $(x,y) \in D$ and $k \in \hat{\C} \setminus [0,1]$, and
\begin{align}\label{phiatinfinity}
	\Phi(x,y,\infty^+) = \frac{1}{2}\begin{pmatrix}
	\overline{\E(x,y)} - 2H(x,y) \overline{H(x,y)} &1& i H(x,y) \\
	\E(x,y) & -1&-iH(x,y) \\
	2i \overline{H(x,y)} \sqrt{f(x,y)}&0&\sqrt{f(x,y)}
	\end{pmatrix}
	\begin{pmatrix}
	1 & 1 & 0 \\
	1 & -1 & 0 \\
	0 & 0 & 2 
	\end{pmatrix},
\end{align}
for $(x,y) \in D$.
Hence we find
\begin{align}\label{Formula m(x,y,0)}
m(x,y,0) &= \Phi(x,y,\infty^+)^{-1} \Phi(x,y,\infty^-) \nonumber
\\
&= \Phi(x,y,\infty^+)^{-1} \Lambda  \Phi(x,y,\infty^+) \Lambda.
\end{align}
Substituting \eqref{phiatinfinity} into \eqref{Formula m(x,y,0)}, we can write the (11), (21), (33), and (23) entries of the right-hand side of (\ref{Formula m(x,y,0)}) as
\begin{align*}
(m(x,y,0))_{11} &=\frac{1+|\E(x,y)|^2- 2|H(x,y)|^2}{2(\re \E(x,y)) - 2|H(x,y)|^2},
\\
(m(x,y,0))_{21} &=\frac{(1-\E(x,y))(1+\overline{\E(x,y)})}{2(\re \E(x,y)) - 2|H(x,y)|^2},
\\
(m(x,y,0))_{33} &= \frac{(\re \E(x,y)) + |H(x,y)|^2 }{(\re \E(x,y)) - |H(x,y)|^2},
\\
(m(x,y,0))_{23} &= \frac{i(1+\overline{\E(x,y)})H(x,y) }{(\re \E(x,y)) - |H(x,y)|^2}.
\end{align*}
Solving these equations for $\E$, $\bar \E$, $H$, and $\bar H$ gives \eqref{EHrecover}. Thus we proved that the $C^n$-solution $\{ \mathcal{E},H \}$ can be represented by \eqref{EHrecover}. Since the solution of the RH problem \eqref{RHm}, whose formulation only involves the values of the boundary data on $[0,x]$ and $[0,y]$, is unique, this also completes the proof of Theorem \ref{mainth1}. \proofend

\subsection{Proof of Theorem \ref{mainth2}} Letting $w(x,y,z)=v(x,y,z)-I$, the existence of a solution of the Riemann--Hilbert problem \eqref{RHm} is equivalent to invertibility of the Cauchy operator (cf. \cite{L2018})
\[
I-\mathcal{C}_{w(x,y,\cdot)},
\]
which is a bounded linear operator $L^2(\Gamma)\to L^2(\Gamma)$. Here the operator $C_{{w(x,y,\cdot)}}$ is defined as follows: For some function $g \in L^2(\Gamma)$, we define the Cauchy integral $\mathcal{C}g$ by
\[
\mathcal{C}g(z)= \frac{1}{2\pi i} \int_{\Gamma} \frac{g(z')}{z'-z}dz', \quad z \in \C \setminus \Gamma,
\]
and by $\mathcal{C}_+g$ and $\mathcal{C}_-g$ we define the non-tangential limits of $\mathcal{C}g$ from the left- and right-hand side of $\Gamma$, respectively. Then the operator $\mathcal{C}_{w}$ is defined by
\[
\mathcal{C}_{w}(g)=\mathcal{C}_-(gw).
\]
Assuming the operator $(I-\mathcal{C}_{w(x,y,\cdot)})$ is invertible, it follows from the theory of singular integral equations that the unique solution $m$ of \eqref{RHm} can be written as (cf. \cite{D1999,L2018})
\begin{align} \label{mviacauchyop}
m = I + \mathcal{C}(\mu w),
\end{align}
where
\[
\mu(x,y,z) =I+ (I-\mathcal{C}_{w(x,y,\cdot)})^{-1}\mathcal{C}_{w(x,y,\cdot)}(I)(z).
\]

In order to study the solution $m(x,y,z)$ as a function of $x$ and $y$, it is useful to fix one contour $\Gamma$ for the family of RH problems \eqref{RHm} and to write the solution in the form \eqref{mviacauchyop} with this fixed $\Gamma$. However, as $x+y \to 1$, the intervals $F_{(x,y)}(\Sigma_0)$ and $F_{(x,y)}(\Sigma_1)$ come arbitrarily close to the origin and to infinity and hence it is impossible to choose one satisfying contour $\Gamma$ for all $(x,y) \in D$. We circumvent this problem by fixing some $\delta >0$ and considering the solution $m(x,y,z)$ of \eqref{RHm} for all $(x,y) \in D_\delta$, where $D_\delta$ is defined as in the statement of Theorem \ref{mainth2}. Then we can fix a contour $\Gamma$ having the following properties:
\begin{itemize}
	\item $\Gamma$ has positive distance to the segments $F_{(x,y)} (\Sigma_0)$ and $F_{(x,y)} (\Sigma_1)$ for all $(x,y) \in D_\delta$.
	\item $\Gamma$ is invariant under the involutions $z \to z^{-1}$ and $z \to \bar{z}$.
	\item $\Gamma$ satisfies all properties required in its definition in Section \ref{laxpairRHsection}.
\end{itemize}

Now the proof of Theorem \ref{mainth2} consists of two steps. First we assume that $(I-\mathcal{C}_{w(x,y,\cdot)})$ is invertible for all $(x,y) \in D$, define $m$ by \eqref{mviacauchyop} and $\{ \mathcal{E},H \}$ by \eqref{EHrecover}, and show all desired properties of the functions $\mathcal{E}$ and $H$ restricted to $D_\delta$. Since $\delta>0$ can be chosen arbitrarily small, this completes the proof of the first part of the theorem. The second part of the theorem treats invertibility of the operator $(I-\mathcal{C}_{w(x,y,\cdot)})$ for all $(x,y) \in D_\delta$ under a small-norm assumption on the boundary data.

For $(x,y) \in D_\delta$ and $z \in \C \setminus \Gamma$, we define $m$ to be the unique solution of the RH problem \eqref{RHm} given by
\[
m(x,y,z) = I + \frac{1}{2 \pi i} \int_{\Gamma} \frac{(\mu w)(x,y,z')}{z'-z}dz'.
\]
The following lemma treats the regularity and symmetry properties of $m$.
\begin{lemma} \label{lemma: proeprties of m}
The function $m$, defined in \eqref{mviacauchyop}, has the following properties:
	\begin{enumerate}[(i)]
	\item For all $z \in \C \setminus \Gamma$, the function $(x,y) \mapsto m(x,y,z)$ is continuous on $D_\delta$ and $C^n$ on $\Int D_\delta$. Furthermore, the functions $(x,y) \mapsto x^\alpha m_x(x,y,z)$, $(x,y) \mapsto y^\alpha m_y(x,y,z)$, and $(x,y)\mapsto x^\alpha y^\alpha m_{xy}(x,y,z)$ are continuous on $D_\delta$.
	\item For all $(x,y) \in D_\delta$, the function $m(x,y,z)$ obeys the symmetries
	\begin{align}\label{msymm}
	m(x,y,z) = m(x,y,0) \Lambda   m(x,y,z^{-1})\Lambda  , \qquad z \in \hat{\C} \setminus \Gamma,
	\end{align}
	and
	\begin{align}\label{msymm2}
	\overline{m(x,y,\bar{z})} = \begin{cases}
	\frac{1}{f_0(x)} \Lambda (m(x,y,z)^{-1})^t \Lambda, \quad &z \in \Omega_0, 
	\\
	\frac{1}{f_1(y)}\Lambda  (m(x,y,z)^{-1})^t \Lambda, \quad &z \in \Omega_1, 
	\\
	\Lambda(m(x,y,z)^{-1})^t \Lambda, \quad &z \in \Omega_\infty.
	\end{cases}
	\end{align}
\end{enumerate}
\end{lemma}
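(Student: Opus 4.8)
The plan is to prove (i) by first establishing the corresponding regularity of the jump matrix $w=v-I$ as a function of $(x,y)$ and then transferring it to $m$ through the Cauchy-operator representation \eqref{mviacauchyop}; part (ii) will be deduced from the matching symmetries of $w$, the invariance of the fixed contour $\Gamma$ under the two involutions, and uniqueness of the RH problem \eqref{RHm}. For the regularity of $w$, I would record that for fixed $z \in \Gamma$, writing $\lambda = \frac{z-1}{z+1}$ and solving \eqref{lambdadef}, the point $F_{(x,y)}^{-1}(z)$ corresponds to the spectral value
\[
k = k(x,y,z) = \frac{\lambda^2 x - (1-y)}{\lambda^2 - 1},
\]
which is real-analytic in $(x,y)$ and, by the defining property that $\Gamma$ stays at positive distance from $F_{(x,y)}(\Sigma_0 \cup \Sigma_1)$ for all $(x,y) \in D_\delta$, ranges over a fixed compact set $K \subset \hat{\C} \setminus [0,1]$ as $(x,y,z)$ runs over $\overline{D_\delta} \times \Gamma$. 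Feeding this into the continuity and $C^n$ statements of Lemma \ref{lemma: Propertiesphi0ph1}(v) (the maps $x \mapsto \Phi_0(x,\cdot)$ and $y \mapsto \Phi_1(y,\cdot)$ into $L^\infty(K)$), I obtain that $(x,y) \mapsto w(x,y,\cdot)$ is continuous from $\overline{D_\delta}$ into $L^\infty(\Gamma)$ and $C^n$ on $\Int D_\delta$. On $\Gamma_0$ the only contribution that fails to be smooth up to $x = 0$ comes from the factor $\Phi_{0x}$, whose weighted version $x^\alpha \Phi_{0x}$ is continuous into $L^\infty(K)$ by the same lemma, while all $y$- and $k$-derivatives stay bounded; interchanging the roles of $x$ and $y$ on $\Gamma_1$, this gives the continuity of $x^\alpha w_x$, $y^\alpha w_y$, and $x^\alpha y^\alpha w_{xy}$ on $\overline{D_\delta}$.

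To transfer this to $m$, I would use that $\mathcal{C}_w$ depends linearly and boundedly on $w \in L^\infty(\Gamma)$, so that $w \mapsto (I - \mathcal{C}_w)^{-1}$ is analytic in operator norm wherever the inverse exists. Hence $\mu = I + (I - \mathcal{C}_w)^{-1}\mathcal{C}_w(I)$ and $m = I + \mathcal{C}(\mu w)$ inherit, for each fixed $z \in \C \setminus \Gamma$, the continuity and $C^n$ regularity of $w$. Differentiating \eqref{mviacauchyop} and using the formula $\partial (I-\mathcal{C}_w)^{-1} = (I-\mathcal{C}_w)^{-1}\mathcal{C}_{\partial w}(I-\mathcal{C}_w)^{-1}$ for the derivative of the inverse, every first derivative of $m$ depends linearly on the corresponding first derivative of $w$, so the weights attach to $m$ exactly as they did to $w$, yielding the continuity of $x^\alpha m_x$, $y^\alpha m_y$, and $x^\alpha y^\alpha m_{xy}$ on $D_\delta$. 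I expect the main obstacle to be the careful bookkeeping of these corner weights through the nonlinear inverse-operator dependence, together with the justification of differentiation under the Cauchy integral; this is exactly where the uniform-in-$k$ estimates of Lemma \ref{lemma: Propertiesphi0ph1}(v) are needed.

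For part (ii), I would first translate the symmetries of $\Phi_0, \Phi_1$ in \eqref{SymmetriesPhi0Phi1} into symmetries of $v$. Since $z \mapsto z^{-1}$ corresponds to $\lambda \mapsto -\lambda$, i.e. to the sheet involution $k^\pm \mapsto k^\mp$ on $\Surface_{(x,y)}$, the first symmetry gives $v(x,y,z) = \Lambda v(x,y,z^{-1}) \Lambda$ on $\Gamma$, while $z \mapsto \bar z$ corresponds to $k \mapsto \bar k$, so the second symmetry gives $\overline{v(x,y,\bar z)} = f_0(x) \Lambda (v(x,y,z)^{-1})^t \Lambda$ on $\Gamma_0$ and the same identity with $f_1(y)$ on $\Gamma_1$. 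To obtain \eqref{msymm}, I set $\tilde m(z) = \Lambda m(x,y,z^{-1}) \Lambda$. Because $\Gamma$ is invariant under $z \mapsto z^{-1}$ and because $\Omega_\infty$ (the $+$-side of $\Gamma$) is likewise invariant and contains both $0$ and $\infty$, the function $\tilde m$ is analytic on $\C \setminus \Gamma$, satisfies the same jump $v$ as $m$ (by the first symmetry of $v$), and tends to $\Lambda m(x,y,0) \Lambda$ as $z \to \infty$, a matrix which is invertible since $\det m(x,y,0) = 1$ by \eqref{determinantm}. Uniqueness of \eqref{RHm} then forces $\tilde m = \Lambda m(x,y,0) \Lambda \, m$, which is \eqref{msymm}.

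Finally, for \eqref{msymm2} I would compare $M(z) := \overline{m(x,y,\bar z)}$ with the candidate $P(z) := c(z) \Lambda (m(x,y,z)^{-1})^t \Lambda$, where $c \equiv 1$ on $\Omega_\infty$, $c \equiv 1/f_0(x)$ on $\Omega_0$, and $c \equiv 1/f_1(y)$ on $\Omega_1$; here $m(x,y,z)^{-1}$ exists for all $z \in \C \setminus \Gamma$ as noted after \eqref{determinantm}. Using that $\Omega_\infty$ is invariant under $z \mapsto \bar z$, the conjugation sends the $+$-side to the $+$-side, so $M$ has jump $\overline{v(x,y,\bar z)}$ across $\Gamma$, whereas a short computation from $m_+ = m_- v$ shows that $P$ has jump $f_0(x)(\Lambda v^t \Lambda)^{-1}$ across $\Gamma_0$ (and $f_1(y)(\Lambda v^t \Lambda)^{-1}$ across $\Gamma_1$), the piecewise-constant factor $c$ being chosen precisely so that these agree with $\overline{v(x,y,\bar z)}$ via the second symmetry of $v$. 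Since both $M$ and $P$ tend to $I$ as $z \to \infty$, the ratio $M P^{-1}$ is entire and normalized, hence equals $I$, giving \eqref{msymm2}. The only delicate points in (ii) are the orientation bookkeeping under the two involutions and the placement of the constants $f_0, f_1$, both of which are resolved by the choice of $\Gamma$ invariant under $z \mapsto z^{-1}$ and $z \mapsto \bar z$.
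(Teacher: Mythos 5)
Your proposal is correct and follows essentially the same route as the paper: part (i) by establishing continuity/$C^n$-regularity (with the corner weights) of $w$ via Lemma \ref{lemma: Propertiesphi0ph1} and the fixed contour's positive distance to $F_{(x,y)}(\Sigma_0\cup\Sigma_1)$, then transferring it to $\mu$ and $m$ through the smooth dependence of $(I-\mathcal{C}_w)^{-1}$ and the Cauchy integral; part (ii) by translating \eqref{SymmetriesPhi0Phi1} into symmetries of $v$ under $z\mapsto z^{-1}$ and $z\mapsto\bar z$ and invoking uniqueness of the RH problem. Your Liouville-type argument for \eqref{msymm2} and the explicit normalization by $\Lambda m(x,y,0)\Lambda$ in \eqref{msymm} simply spell out the uniqueness step that the paper states without detail.
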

{\it Proof of Lemma \ref{lemma: proeprties of m}}. 
We first prove part (i). We will show that the maps
\begin{align}
D_\delta \to L^\infty(\Gamma),\quad&(x,y) \mapsto (z\mapsto w(x,y,z)),\label{mapsformregularitya}
\end{align}
and
\begin{align}
D_\delta \to L^2(\Gamma),\quad &(x,y) \mapsto (z\mapsto \mu(x,y,z)),\label{mapsformregularityb}
\end{align}
are continuous on $D_\delta$ and $C^n$ on $\Int D_\delta$. Let $(x_0,y_0) \in D_\delta$. We fix a small neighborhood $U\subset D_\delta $ of $(x_0,y_0)$ such that there exists a compact set $K\subset \hat{\C}$ with $ F_{(x,y)}^{-1}(\Gamma)\subset K^\pm$ and $K \cap([0,x) \cup (1-y,1])=\emptyset$ for all $(x,y) \in U$. Part (i) and (v) of Lemma \ref{lemma: Propertiesphi0ph1} show that the maps
\begin{align}\label{xymapstophi}
(x,y) \mapsto  (k\mapsto \Phi_0(x,k^\pm)), \qquad (x,y)\mapsto (k\mapsto \Phi_1(y,k^\pm))
\end{align}
are continuous as maps $U \to C^n(K)$ and $C^n$ as maps $\Int U \to C^n(K)$, where $C^n(K)$ denotes the space of $n$ times continuously differentiable functions on $K$ equipped with the norm
\[
\lVert g \rVert_{C^n} = \max_{|\alpha|\le n} \lVert D^\alpha g\rVert_{L^\infty}.
\] 
The map \eqref{mapsformregularitya} restricted to $U$ can be realized as composition of the maps \eqref{xymapstophi} and the map
\begin{align}
\mathcal{F}:U\to \mathcal{B}(C^n(K),C(\Gamma)), \quad (x,y) \mapsto \big(g \mapsto g\circ F_{(x,y)}^{-1}\big),
\end{align}
where $\mathcal{B}(\cdot,\cdot)$ the space of bounded linear operators between two linear normed spaces equipped with the operator norm. In the definition of $\mathcal{F}$, we always consider the $k$-projection of $(\lambda,k)=F_{(x,y)}^{-1}(z)$. We will show that the map $\mathcal{F}$ is continuous on $U$ and $C^n$ on $\Int U$. Continuity of $\mathcal{F}$ follows from
 \begin{align*}
 \sup_{\|g\|_{C^n}=1} \sup_{z \in \Gamma} \big|g(F_{(x,y)}^{-1}(z)) - g(F_{(x',y')}^{-1}(z))\big| \to 0
 \end{align*}
 as $(x',y') \to (x,y)\in U$ by uniform continuity of $g$. Differentiability of $\mathcal{F}$ on $\Int U$ follows from
 \begin{align*}
\sup_{\|g\|_{C^n}=1} \sup_{z \in \Gamma} \bigg|\frac{g(F_{(x+h,y)}^{-1}(z)) - g(F_{(x,y)}^{-1}(z))}{h} - \frac{d}{dx}g(F_{(x,y)}^{-1}(z))  \bigg| \to 0
 \end{align*}
  as $h \to  0$. Analogous limits for the higher derivatives show that $\mathcal{F}$ is $C^n$. Since $(x_0,y_0)$ was arbitrary, this proves that the map \eqref{mapsformregularitya} is continuous on $D_\delta$ and $C^n$ on $\Int D_\delta$. Now the fact that the maps
  \begin{align*}
  g\mapsto I-\mathcal{C}_g: L^\infty(\Gamma) \to \mathcal{B}(L^2(\Gamma)), \quad g\mapsto \mathcal{C}_-g: L^2(\Gamma) \to L^2(\Gamma)
  \end{align*}
  are smooth implies that \eqref{mapsformregularityb} is continuous on $D_\delta$ and $C^n$ on $\Int D_\delta$. Similar arguments show that the maps
  \begin{align*}
(x,y) &\mapsto (z\mapsto x^\alpha w_x(x,y,z)), & (x,y) &\mapsto (z\mapsto x^\alpha \mu_x(x,y,z)),
\\
(x,y) &\mapsto (z\mapsto y^\alpha w_y(x,y,z)), & (x,y) &\mapsto (z\mapsto y^\alpha \mu_y(x,y,z)),
\\
(x,y) &\mapsto (z\mapsto x^\alpha y^\alpha w_{xy}(x,y,z)), & (x,y) &\mapsto (z\mapsto x^\alpha  y^\alpha \mu_{xy}(x,y,z))
 \end{align*}
are continuous as maps $D_\delta \to L^\infty(\Gamma)$ and $D_\delta \to L^2(\Gamma)$, respectively. Now part (i) follows from \eqref{mviacauchyop}.
  
For part (ii), we note that the symmetries \eqref{SymmetriesPhi0Phi1} imply
\begin{align*}
v(x,y,z^{-1})=\Lambda v(x,y,z) \Lambda,\qquad\overline{ v(x,y,\bar{z})}= f_j(x) \Lambda \big(v(x,y,z)^{-1}\big)^t \Lambda,
\end{align*}
for $z \in \Gamma_j$, $j=0,1$. Hence part (ii) follows from uniqueness of the solution of the RH problem \eqref{RHm}.
\proofendcontinue
	
Let $(x,y) \in D_\delta$. We define $\hat{m}(x,y) = m(x,y,0)$. Then the symmetry \eqref{msymm} implies
\[
\hat{m}(x,y)^{-1} = \Lambda \hat{m}(x,y)  \Lambda.
\]
Together with equation \eqref{determinantm} this gives
\begin{align} \label{adjm}
\text{adj}(\hat{m})=
\begin{pmatrix}
\hat{m}_{11} & -\hat{m}_{12} & \hat{m}_{13}
\\ 
-\hat{m}_{21} & \hat{m}_{22} & -\hat{m}_{23}
\\
\hat{m}_{31} & -\hat{m}_{32} & \hat{m}_{33}
\end{pmatrix},
\end{align}
where $\text{adj}(\hat{m})$ denotes the adjugate matrix of $\hat{m}$. After some calculations, this yields
\begin{subequations}\label{symmetriesm0}
	\begin{align}
	\hat{m}_{33} &=1-\hat{m}_{11}+\hat{m}_{22}, \label{symmetriesm01}
	\\ 
	\hat{m}_{12}\hat{m}_{21} &= (\hat{m}_{11}-1)(\hat{m}_{22}+1), \label{symmetriesm02}
	\\
	\hat{m}_{13}\hat{m}_{31} &=(\hat{m}_{11}-1)(\hat{m}_{22}-\hat{m}_{11}),\label{symmetriesm03}
	\\
	\hat{m}_{23}\hat{m}_{32}&=(\hat{m}_{22}+1)(\hat{m}_{22}-\hat{m}_{11}).\label{symmetriesm04}
	\end{align}
\end{subequations}
Furthermore, the symmetries \eqref{msymm} and \eqref{msymm2} imply
\begin{align} \label{mtranspose}
	\overline{\hat{m}} = \hat{m}^t.
\end{align}
Hence the identities \eqref{symmetriesm02} and \eqref{symmetriesm04} yield
\[
|\hat{m}_{21}|^2 + |\hat{m}_{23}|^2=(\hat{m}_{22}+1)(\hat{m}_{11}-1 + \hat{m}_{22}-\hat{m}_{11}) = \hat{m}_{22}^2-1,
\]
which gives $(\hat{m}(x,y))_{22} \in (-\infty,-1] \cup [1,\infty)$. For $(x,y) = (0,0)$ we have $m(0,0,z) = I$ for all $z$, because the jump matrix $v$ becomes the identity matrix. In particular, we obtain $\hat{m}_{22}(0,0)=1$. By continuity, this gives $(\hat{m}(x,y))_{22} \geq 1$ for all  $(x,y) \in D_\delta$. This, together with the identities (\ref{symmetriesm0}), implies $(\hat{m}(x,y))_{11} \geq 1$ and $(\hat{m}(x,y))_{33} \geq 1$ for all $(x,y) \in D_\delta$.

A straightforward algebraic computation then shows that
\begin{align}\label{mhatexpression}
\hat{m}(x,y) = \tilde{\Phi}(x,y)^{-1} \Lambda \tilde{\Phi}(x,y)\Lambda, \qquad (x,y) \in D_\delta,
\end{align}
where the $3\times 3$-matrix valued function $\tilde{\Phi}(x,y)$ is defined by
\begin{align}\label{tildePhidef}
\tilde{\Phi}(x,y) = \frac{1}{2}\begin{pmatrix}
\overline{\E(x,y)} - 2H(x,y) \overline{H(x,y)} &1& i H(x,y)
\\
\E(x,y) & -1&-iH(x,y) 
\\
2i \overline{H(x,y)} \sqrt{f(x,y)}&0&\sqrt{f(x,y)}
\end{pmatrix}
\begin{pmatrix}
1 & 1 & 0 
\\
1 & -1 & 0 
\\
0 & 0 & 2 
\end{pmatrix},
\end{align}
and the functions $\mathcal{E}(x,y)$, $\overline{\mathcal{E}(x,y)}$, $H(x,y)$, and $ \overline{H(x,y)}$ are defined by
\begin{subequations}\label{EHdef}
	\begin{align}
	\mathcal{E} &= \frac{\hat{m}_{33}+ \hat{m}_{11} - \hat{m}_{21}}{\hat{m}_{33} + \hat{m}_{11} + \hat{m}_{21}}, \qquad
	\bar{\mathcal{E}} = -\frac{1 - \hat{m}_{11} + \hat{m}_{21}}{1 - \hat{m}_{11} - \hat{m}_{21}}, \label{EHdef1}
	\\
	H&=i\frac{-\hat{m}_{23}}{\hat{m}_{11}+\hat{m}_{21}+\hat{m}_{33}}, \quad \bar{H}= i\frac{\hat{m}_{21}(\hat{m}_{33}-1)}{(\hat{m}_{11}+\hat{m}_{21}-1)\hat{m}_{23}},\label{EHdef2}
	\end{align}
\end{subequations}
and $f=(\E + \bar{\E})/2 -H\bar{H}$ as before. Recalling the identities \eqref{symmetriesm0} and  \eqref{mtranspose} it follows that $\bar{\mathcal{E}}$ is the complex conjugate of $\mathcal{E}$ and $\bar{H}$ is the complex conjugate of $H$ whenever both expressions are well defined. Note that we have
\begin{align} \label{expressionf}
\nonumber f&= \frac{2 \hat{m}_{21}}{(\hat{m}_{33}+\hat{m}_{11}+\hat{m}_{21})(\hat{m}_{11}+\hat{m}_{21}-1)}
\\ 
&= \frac{2(\hat{m}_{11}-1)(\hat{m}_{22}+1)}{\hat{m}_{12}(1+\hat{m}_{22}+\hat{m}_{21})\big(\hat{m}_{11}+\frac{(\hat{m}_{11}-1)(\hat{m}_{22}+1)}{\hat{m}_{12}}-1\big)} 
\\
&= \frac{2(\hat{m}_{22}+1)}{(1+\hat{m}_{22}+\hat{m}_{21})(\hat{m}_{12}+\hat{m}_{22}+1)}= \frac{2(\hat{m}_{22}+1)}{|1+\hat{m}_{22}+\hat{m}_{21}|^2} >0,\nonumber
\end{align}
which proves $f >0$, whenever all terms in \eqref{EHdef} are well defined, i.e. all denominators are non-zero. Next we show that $\mathcal{E}$ and $H$ are free of singularities and have the desired regularity properties.

The first equation in \eqref{EHdef1} shows that $\mathcal{E}(x,y)$ has the same regularity properties as $\hat{m}(x,y)$ except possibly on the set
\begin{align}\label{singular1}
\{(x,y) \in D_\delta \, | \, (\hat{m}(x,y))_{11} + (\hat{m}(x,y))_{21} = -(\hat{m}(x,y))_{33}\}
\end{align}
where the denominator vanishes. 
In the same way, the second equation in \eqref{EHdef1} shows that $\mathcal{E}(x,y)$ is regular away from the set
\begin{align}\label{singular2}
\{(x,y) \in D_\delta \, | \, (\hat{m}(x,y))_{11} + (\hat{m}(x,y))_{21} = 1\}.
\end{align}
Since $\hat{m}_{33} \ge 1$, the sets \eqref{singular1} and \eqref{singular2} are disjoint and closed in $D_\delta$. Recalling the regularity properties of $m$, we conclude that $\mathcal{E} \in C(D_\delta) \cap C^n(\Int D_\delta)$. For $H$, the same argument is valid if $\hat{m}_{23} \neq 0$. Suppose that $(\hat{m}(x,y))_{23} = 0$ and $(\hat{m}(x,y))_{11} + (\hat{m}(x,y))_{21} = -(\hat{m}(x,y))_{33}$. Then $(\hat{m}(x,y))_{21}$ must be real, i.e. $(\hat{m}(x,y))_{21}=(\hat{m}(x,y))_{12}$. From $(\hat{m}(x,y))_{23} =0$ and \eqref{symmetriesm04} it follows that $(\hat{m}(x,y))_{11} =(\hat{m}(x,y))_{22}$ and thus $(\hat{m}(x,y))_{33} =1$ by \eqref{symmetriesm0}. Then \eqref{symmetriesm02} and $ (\hat{m}(x,y))_{21} = -(\hat{m}(x,y))_{33}-(\hat{m}(x,y))_{11}$ imply
\[
((\hat{m}(x,y))_{22} -1)((\hat{m}(x,y))_{22} +1)=(\hat{m}(x,y))_{21} ^2 = ((\hat{m}(x,y))_{22} +1)^2,
\]
which is a contradiction. We conclude that $H \in C(D_\delta) \cap C^n(\Int D_\delta)$. The same argument shows that $1+\hat{m}_{21}+ \hat{m}_{22} \neq 0$ and hence, by \eqref{expressionf}, $f(x,y)>0$ for all $(x,y) \in D_\delta$. In view of the regularity properties of $m$ and \eqref{expressionf}, we also proved $x^\alpha \E_x$, $x^\alpha H_x$, $y^\alpha \E_y$, $y^\alpha H_y$, $x^\alpha y^\alpha \E_{xy}$, $x^\alpha y^\alpha H_{xy} \in C(D_\delta)$. 

We show next that $\E(x,0) = \E_0(x)$ and $H(x,0)=H_0(x)$ for $x \in [0,1-\delta)$ and $\E(0,y) = \E_1(y)$ and $H(0,y)=H_1(y)$ for $y \in [0,1-\delta)$. From the properties of $\Phi_0$ given in Lemma \ref{lemma: Propertiesphi0ph1}, we see that the function
\begin{align} \label{def of m_0}
m_0(x,z)= \Phi_0(x,\infty^+)^{-1} \times \begin{cases}
I, \quad &z\in \Omega_0,
\\
\Phi_0\big( x,F_{(x,0)}^{-1}(z) \big), \quad &z \in \Omega_1 \cup \Omega_\infty,
\end{cases}
\end{align}
satisfies the RH problem \eqref{RHm} with $(x,y)=(x,0)$. After substituting in \eqref{Phi0Phi1atinfinitya}, a straightforward computation shows
\begin{align*}
(m_0(x,0))_{11} &=\frac{1+|\E_0(x)|^2- 2|H_0(x)|^2}{2(\re \E_0(x)) - 2|H_0(x)|^2},
&
(m_0(x,0))_{21} &=\frac{(1-\E_0(x))(1+\overline{\E_0(x)})}{2(\re \E_0(x)) - 2|H_0(x)|^2},
\\
(m_0(x,0))_{33} &= \frac{(\re \E_0(x)) + |H_0(x)|^2 }{(\re \E_0(x)) - |H_0(x)|^2},
&
(m_0(x,0))_{23} &= \frac{i(1+\overline{\E_0(x)})H_0(x) }{(\re \E_0(x)) - |H_0(x)|^2}.
\end{align*}
Solving these equations for $\E_0$, $\bar{\E_0}$, $H$, and $\bar{H}$ yields
\begin{align}
\E_0(x) &= \frac{(m_0(x,0))_{33}+(m_0(x,0))_{11}-(m_0(x,0))_{21} }{(m_0(x,0))_{33}+(m_0(x,0))_{11}+(m_0(x,0))_{21}}, \\ 
H_0(x) &=\frac{-i(m_0(x,0))_{23}}{(m_0(x,0))_{33}+(m_0(x,0))_{11}+(m_0(x,0))_{21}}.
\end{align}
But we have $m_0(x,z)=m(x,0,z)$ by uniqueness of the solution of the RH problem \eqref{RHm} and hence, recalling \eqref{EHdef}, we see that $\E_0(x)=\E(x,0)$ and $H_0(x)=H(x,0)$. The proof for $\E_1$ and $H_1$ is similar.

It only remains to show that $\{ \mathcal{E},H \}$ satisfies the hyperbolic Ernst--Maxwell equations \eqref{ErnstMaxwellEquations} in order to complete the proof of the  first part of Theorem \ref{mainth2}. Motivated by \eqref{formulamuniqueness}, we define for $(x,y) \in D_\delta$ and $P \in F_{(x,y)}^{-1}(\Omega_\infty)$
\begin{align}\label{def Phi in existence}
\Phi(x,y,P)=\tilde{\Phi}(x,y)m(x,y,F_{(x,y)}(P)).
\end{align}
Then the map $(x,y) \mapsto \Phi(x,y,P)$ is $C^n$ by the regularity properties of $m$.
\begin{lemma}\label{lemma: Phi satisfies lax pair}
We define $\U$ and $\V$ to be the matrices defined in \eqref{DefUV}, where $\E$ and $H$ are given by \eqref{EHdef}. Then the function $\Phi(x,y,P)$ defined in \eqref{def Phi in existence} satisfies the Lax pair equations \eqref{LaxPair}.
\end{lemma}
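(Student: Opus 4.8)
The plan is to set $L := \Phi_x \Phi^{-1}$ and $M := \Phi_y \Phi^{-1}$, the logarithmic derivatives taken at fixed $k$ along the sheets $k^\pm$ of $\Surface_{(x,y)}$, and to identify them with $\U$ and $\V$ by a Liouville argument on the Riemann surface. The guiding observation is that $\U = \U^{(0)} + \lambda\,\U^{(1)}$ is affine in $\lambda$, so, viewed on $\Surface_{(x,y)}$, it is meromorphic with a single simple pole at the point $(\infty,x)$ (where $\lambda=\infty$) and regular at $(0,1-y)$ (where $\lambda=0$); symmetrically, $\V = \V^{(0)} + \lambda^{-1}\,\V^{(1)}$ is affine in $\lambda^{-1}$, meromorphic with a single simple pole at $(0,1-y)$ and regular at $(\infty,x)$. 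If $L$ can be shown to have exactly this singularity structure, Liouville's theorem on the sphere forces $L = L^{(0)} + \lambda L^{(1)}$, after which matching at two points pins the coefficients down.

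First I would extend $\Phi$, which \eqref{def Phi in existence} defines only for $P \in F_{(x,y)}^{-1}(\Omega_\infty)$, to an analytic function on $\Surface_{(x,y)}\setminus(\Sigma_0\cup\Sigma_1)$ by continuation through the jump of $m$ across $\Gamma$, combined with $\Phi_0$ and $\Phi_1$ as in the reconstruction formula \eqref{formulamuniqueness}; its $C^n$-dependence on $(x,y)$ is inherited from Lemma \ref{lemma: proeprties of m}(i), and its invertibility off the cuts follows from $\det\Phi = f^{3/2}>0$. I would also record, by the computations underlying \eqref{phiatinfinity}, that $\Phi(x,y,\infty^+)=\tilde{\Phi}(x,y)$ and that $\Phi$ obeys the symmetry $\Phi(x,y,k^+)=\Lambda\Phi(x,y,k^-)\Lambda$ of \eqref{phisymmetries}; in particular $\Phi(x,y,\infty^-)=\Lambda\tilde{\Phi}(x,y)\Lambda$.

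The heart of the argument is to prove that $L$ is meromorphic on all of $\Surface_{(x,y)}$ with at worst a simple pole at $(\infty,x)$. Away from $\Sigma_0\cup\Sigma_1$ this is clear, since $\Phi$ is analytic and invertible there. Continuation across $\Sigma_1$ is immediate: a reasoning parallel to Lemma \ref{lemma: phiinversephianalytic} shows $\Phi(x,y,\cdot)\,\Phi_1(y,\cdot)^{-1}$ is analytic across $\Sigma_1$, and since $\Phi_1$ is independent of $x$ one has $L = (\Phi\Phi_1^{-1})_x(\Phi\Phi_1^{-1})^{-1}$ there. The real obstacle is the continuation across $\Sigma_0$ together with the control of the behaviour at $(\infty,x)$ and $(0,1-y)$: here I would use the factorization $\Phi = (\Phi\Phi_0^{-1})\Phi_0$, the analyticity of $\Phi\Phi_0^{-1}$ off $\Sigma_1$, and the fact that $\Phi_0$ solves $\Phi_{0x}=\U_0\Phi_0$, so that the singular part of $L$ across $\Sigma_0$ and near the branch point is governed by $\U_0$, whose pole/branch structure must be shown to coincide with that of $\U$. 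This step is delicate because $\Phi_0$ is discontinuous across $\Sigma_0$ through its Volterra definition (not merely through $\lambda$), and because the derivative $\partial_x|_k$ relevant to the Lax pair must be reconciled with the variable $z$ in which $m$ is naturally given; I expect the bookkeeping relating $\partial_x|_k$ and $\partial_x|_z$ (via $z=(1+\lambda)/(1-\lambda)$ and $\lambda_x=\lambda/(2(k-x))$) together with the cancellation of the $\Sigma_0$-jump to be the main difficulty.

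Once $L = L^{(0)}+\lambda L^{(1)}$ is established, the coefficients are fixed by evaluating at the two points over $k=\infty$. At $\infty^+$ ($\lambda=1$) one gets $L(\infty^+)=\tilde{\Phi}_x\tilde{\Phi}^{-1}$, which is computed explicitly from \eqref{tildePhidef} in terms of $\E,H,f$ and equals $\U|_{\lambda=1}=\U^{(0)}+\U^{(1)}$, while the symmetry gives $L(\infty^-)=\Lambda\tilde{\Phi}_x\tilde{\Phi}^{-1}\Lambda=\U|_{\lambda=-1}=\U^{(0)}-\U^{(1)}$ at $\infty^-$ ($\lambda=-1$). Solving these two linear relations yields $L^{(0)}=\U^{(0)}$ and $L^{(1)}=\U^{(1)}$, i.e. $L=\U$ and hence $\Phi_x=\U\Phi$. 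Running the identical argument with $x$ and $y$ interchanged, the pole of $M$ now sitting at $(0,1-y)$ where $\lambda^{-1}=\infty$ and $\Sigma_1$ playing the delicate role, gives $M=\V$ and thus $\Phi_y=\V\Phi$, completing the proof.
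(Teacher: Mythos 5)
Your proposal is correct and is essentially the paper's own proof transported to the Riemann surface: the paper's function $\mathfrak{n}$ (i.e. $\mathfrak{f}=(m_x+z_x m_z)m^{-1}$ corrected by $m\,\U_0\big(x,F_{(x,y)}^{-1}(z)\big)m^{-1}$ in $\Omega_0$) is precisely $\tilde{\Phi}^{-1}\big(\Phi_x\Phi^{-1}-\tilde{\Phi}_x\tilde{\Phi}^{-1}\big)\tilde{\Phi}$ written in the $z$-plane, and the paper's Liouville step (analytic on $\hat{\C}\setminus\{-1\}$, simple pole at $z=-1$, vanishing at $z=\infty$) is exactly your statement that $L=\Phi_x\Phi^{-1}$ is affine in $\lambda$ with its only pole at the branch point $(\infty,x)$, the jump across $\Gamma_0$ being cancelled through $\Phi_{0x}=\U_0\Phi_0$ just as you outline for $\Sigma_0$. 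The only real difference is the endgame: the paper computes the residue explicitly as $\hat{m}_x\hat{m}^{-1}$ and then verifies $\Phi_x=\U\Phi$ by a direct calculation, whereas you pin down the two coefficients by matching at $\infty^{\pm}$ using the symmetry $\Phi(x,y,k^{+})=\Lambda\Phi(x,y,k^{-})\Lambda$ (which indeed holds here, by \eqref{msymm} together with \eqref{mhatexpression}) and the single algebraic identity $\tilde{\Phi}_x\tilde{\Phi}^{-1}=\U|_{\lambda=1}$ --- a legitimate and slightly cleaner finish to the same argument.
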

{\it Proof of Lemma \ref{lemma: Phi satisfies lax pair}}.  The idea of the proof is to use the analyticity structure of $\Phi_x$. Differentiating \eqref{def Phi in existence} yields for $P \in F_{(x,y)}^{-1}(\Omega_\infty)$
\begin{align} \label{Phi in existence differentiated}
\Phi_x(x,y,P)& = \tilde{\Phi}_x(x,y)m(x,y,z(x,y,P))
+ \tilde{\Phi}(x,y)\mathfrak{f}(x,y,z(x,y,P)) m(x,y,z(x,y,P)),
\end{align}
where $z(x,y,P)=F_{(x,y)}(P)$, and
\begin{align*}
\mathfrak{f}(x,y,z) = \big[m_x(x,y,z) + z_x\big(x,y, F_{(x,y)}^{-1}(z)\big) m_z(x,y,z)\big]m(x,y,z)^{-1}, \qquad z \in \hat{\C} \setminus \Gamma.
\end{align*}
Equation \eqref{mviacauchyop} and the identity
\begin{align*}
z_x\big(  x,y,F_{(x,y)}^{-1}(z)\big) = -\frac{1-z}{1+z} \frac{z}{1-x-y}
\end{align*}
imply that $\mathfrak{f}$ has a simple pole at $z=-1$ and is analytic at $z=\infty$ with $\mathfrak{f}(x,y,\infty)=0$. Hence $\mathfrak{f}$ is analytic on $\hat{\C} \setminus\big( \Gamma \cup \{ -1 \}\big)$ with the jump
\begin{align*}\nonumber
\mathfrak{f}_{+}(x,y,z) = &\; \mathfrak{f}_{-}(x,y,z) + m_-(x,y,z)\big[v_x(x,y,z) + z_x\big(x,y, F_{(x,y)}^{-1}(z)\big) v_z(x,y,z)\big]
\\ \label{f0jump}
& \times v(x,y,z)^{-1} m_-(x,y,z)^{-1}, \qquad z \in \Gamma,
\end{align*}
across $\Gamma$. Note that we used analyticity of $v$ in a small neighborhood of $\Gamma$, which follows from Lemma \ref{lemma: Propertiesphi0ph1} (i). By the definition of the jump matrix $v$, we have
\begin{align}
\begin{cases}
v_x(x,y,z) + z_x\big(x,y, F_{(x,y)}^{-1}(z)\big) v_z(x,y,z) = \Phi_{0x}(x,F_{(x,y)}^{-1}(z)),  \quad &z \in \Gamma_0,
\\
v_x(x,y,z) + z_x\big(x,y, F_{(x,y)}^{-1}(z)\big) v_z(x,y,z) = 0,  \quad &z \in \Gamma_1.
\end{cases}
\end{align}
Hence, the function
\begin{align}\label{ndef}
	\mathfrak{n}(x,y,z) = \begin{cases}
	\mathfrak{f}(x,y,z) + m(x,y,z)\mathsf{U}_0\big(x,F_{(x,y)}^{-1}(z)\big)m(x,y,z)^{-1}, \quad &  z \in \Omega_0,	\\
	\mathfrak{f}(x,y,z), & z \in \Omega_1 \cup \Omega_\infty,
	\end{cases}
\end{align}
is analytic on $\hat{\C} \setminus \{-1\}$ with a simple pole at $z=-1$ and $\mathfrak{n}(x,y,\infty)=0$. This implies that
\begin{align*}
		\mathfrak{f}(x,y,z)= \mathfrak{n}(x,y,z) = \frac{	\mathfrak{n}(x,y,0)}{z+1} =\frac{m_x(x,y,0)m(x,y,0)^{-1}}{z+1}
\end{align*}
for all $z \in \Omega_\infty$. Substituting this identity into \eqref{Phi in existence differentiated} yields
\begin{align*}
\Phi_x(x,y,P)= \bigg(\tilde{\Phi}_x(x,y)\tilde{\Phi}(x,y)^{-1} 
+ \tilde{\Phi}(x,y) \frac{\hat{m}_x(x,y)\hat{m}(x,y)^{-1}}{z(x,y,P)+1}\tilde{\Phi}(x,y)^{-1}\bigg)\Phi(x,y,P).
\end{align*}
Recalling \eqref{tildePhidef} and \eqref{EHdef}, a straightforward calculation yields $\Phi_x= \U \Phi$. The identity $\Phi_y = \V \Phi$ can be shown in similar way.
 \proofendcontinue

Fixing a point $P=(\lambda,k) \in F_{(x,y)}^{-1}(\Omega_\infty)$, we have
\[
\Phi_{xy}(x,y,P)-\Phi_{yx}(x,y,P)=0,
\]
since $\Phi(x,y,P)$ is $C^n$. In view of Lemma \ref{lemma: Phi satisfies lax pair}, this implies
\begin{align} \label{LaxPaireq}
\U_y-\V_x +[\U,\V]=0.
\end{align}
The $(23)$-entry of equation \eqref{LaxPaireq} reads
$$\frac{i(1-x-y)\lambda}{(f(x,y))^{3/2} (k-(1-y))}\bigg\{f\bigg(H_{xy} - \frac{H_x + H_y}{2(1-x-y)}\bigg) -\frac{1}{2}(\mathcal{E}_y H_x+\mathcal{E}_xH_y)+2\bar{H}H_xH_y \bigg\} = 0,$$
which implies that $\E$ and $H$ satisfy the second equation in \eqref{ErnstMaxwellEquations} for all $(x,y) \in D_\delta$.

The $(21)$-entry of equation \eqref{LaxPaireq} reads
\begin{align*}\frac{-(1-x-y)\lambda}{2(f(x,y))^2 (k-(1-y))}\bigg\{f\bigg(\mathcal{E}_{xy} - \frac{\mathcal{E}_x + \mathcal{E}_y}{2(1-x-y)}\bigg) - \mathcal{E}_x \mathcal{E}_y +\bar{H}(\E_xH_y+\E_yH_x) 
\\
-2\bar{H}\left[ f\bigg(H_{xy} - \frac{H_x + H_y}{2(1-x-y)}\bigg) -\frac{1}{2}(\mathcal{E}_y H_x+\mathcal{E}_xH_y)+2\bar{H}H_xH_y \right]\bigg\} = 0.
\end{align*}
Since $\E$ and $H$ satisfy the second equation in \eqref{ErnstMaxwellEquations} for all $(x,y) \in D_\delta$, it follows that the second term in the curly brackets vanishes. Consequently, $\E$ and $H$ satisfy the first equation in \eqref{ErnstMaxwellEquations} for all $(x,y) \in D_\delta$. 

This completes the proof of the first part of the theorem. The following lemma shows the second part.
\begin{lemma} \label{lemma: boundary data small}
There exists a constant $c_\delta>0$ such that the following statement holds: If
\begin{align*}
\lVert A_0 \rVert_{L^1([0,1-\delta)}, \lVert B_1 \rVert_{L^1([0,1-\delta)},\bigg\lVert \frac{H_0}{\sqrt{f_0}} \bigg\rVert_{L^1([0,1-\delta)},\bigg\lVert {\frac{H_1}{\sqrt{f_1}}} \bigg\rVert_{L^1([0,1-\delta)} < c_\delta,
\end{align*}
then the linear bounded operator $I-\mathcal{C}_{w(x,y,\cdot)}:L^2(\Gamma) \to L^2(\Gamma)$ is invertible.
\end{lemma}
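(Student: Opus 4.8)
The plan is to invert $I-\mathcal{C}_{w(x,y,\cdot)}$ by a Neumann series, i.e. to arrange that $\mathcal{C}_{w(x,y,\cdot)}$ has operator norm strictly less than $1$ on $L^2(\Gamma)$, uniformly for $(x,y)\in D_\delta$. Since $\mathcal{C}_w(g)=\mathcal{C}_-(gw)$ and $\mathcal{C}_-$ is bounded on $L^2(\Gamma)$, I would first record the elementary bound
\begin{align*}
\lVert \mathcal{C}_{w(x,y,\cdot)} \rVert_{\mathcal{B}(L^2(\Gamma))} \le \lVert \mathcal{C}_- \rVert_{\mathcal{B}(L^2(\Gamma))}\, \lVert w(x,y,\cdot) \rVert_{L^\infty(\Gamma)},
\end{align*}
so that everything reduces to making $\lVert w(x,y,\cdot)\rVert_{L^\infty(\Gamma)}$ smaller than $\lVert \mathcal{C}_-\rVert_{\mathcal{B}(L^2(\Gamma))}^{-1}$, uniformly in $(x,y)\in D_\delta$, by shrinking the $L^1$-norms in the hypothesis.

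To estimate $w=v-I$, I would use that by \eqref{jumpdef} the jump equals $\Phi_0(x,F_{(x,y)}^{-1}(z))$ on $\Gamma_0$ and $\Phi_1(y,F_{(x,y)}^{-1}(z))$ on $\Gamma_1$. From the successive-approximation representation \eqref{defphi0second} together with the bound \eqref{estimatephi0j}, one gets for every $k$ the exponential estimate
\begin{align*}
\big| \Phi_0(x,k^\pm) - I \big| \le \sum_{j=1}^{\infty} \frac{1}{j!} \lVert \U_0(\cdot,k^\pm) \rVert_{L^1([0,x))}^j = e^{\lVert \U_0(\cdot,k^\pm) \rVert_{L^1([0,x))}}-1,
\end{align*}
and analogously $\big| \Phi_1(y,k^\pm) - I \big| \le e^{\lVert \V_1(\cdot,k^\pm) \rVert_{L^1([0,y))}}-1$. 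Since $t\mapsto e^t-1$ is increasing and vanishes at $t=0$, the task then becomes to bound $\lVert \U_0(\cdot,k^\pm)\rVert_{L^1([0,1-\delta))}$ and $\lVert \V_1(\cdot,k^\pm)\rVert_{L^1([0,1-\delta))}$, uniformly in $k\in F_{(x,y)}^{-1}(\Gamma)$, by a small multiple of the quantities in the hypothesis.

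For this last bound the decisive factors are the powers of $\lambda$ sitting in the entries of $\U_0$ and $\V_1$: besides $A_0,\overline{A_0},\tfrac{i}{\sqrt{f_0}}H_{0x}$ (resp.\ $B_1,\overline{B_1},\tfrac{i}{\sqrt{f_1}}H_{1y}$) one encounters $\lambda(x',0,k^\pm)$ on $\Gamma_0$ and $\lambda(0,y',k^\pm)^{-1}$ on $\Gamma_1$. I would exploit that on $D_\delta$ the branch cut $[x,1-y]$ always has length at least $\delta$, so that a single fixed contour $\Gamma$ can be kept at positive distance from $F_{(x,y)}(\Sigma_0)$ and $F_{(x,y)}(\Sigma_1)$; pulling back by the biholomorphism $F_{(x,y)}$, the loop $F_{(x,y)}^{-1}(\Gamma_0)$ then stays, in the $k$-coordinate, at uniform positive distance from $[0,x]$ and $F_{(x,y)}^{-1}(\Gamma_1)$ at uniform positive distance from $[1-y,1]$, all inside a fixed compact subset of $\hat{\C}$. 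This yields a constant $C_\delta$, depending only on $\delta$, with $|\lambda(x',0,k^\pm)|\le C_\delta$ for $z\in\Gamma_0,\,x'\in[0,x]$ and $|\lambda(0,y',k^\pm)^{-1}|\le C_\delta$ for $z\in\Gamma_1,\,y'\in[0,y]$, whence
\begin{align*}
\lVert \U_0(\cdot,k^\pm) \rVert_{L^1([0,1-\delta))} \le C_\delta \Big( \lVert A_0 \rVert_{L^1([0,1-\delta))} + \big\lVert \tfrac{H_{0x}}{\sqrt{f_0}} \big\rVert_{L^1([0,1-\delta))} \Big), \qquad z\in\Gamma_0,
\end{align*}
together with the analogous estimate for $\V_1$ in terms of $\lVert B_1\rVert_{L^1}$ and $\lVert H_{1y}/\sqrt{f_1}\rVert_{L^1}$ on $\Gamma_1$.

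Combining the three displays, I would then choose $c_\delta>0$ so small that the hypothesis forces $\lVert w(x,y,\cdot)\rVert_{L^\infty(\Gamma)}<\lVert \mathcal{C}_-\rVert_{\mathcal{B}(L^2(\Gamma))}^{-1}$ for all $(x,y)\in D_\delta$, which makes $I-\mathcal{C}_{w(x,y,\cdot)}$ invertible through its Neumann series. The step I expect to be the main obstacle is exactly the uniform control of the factors $\lambda^{\pm1}$: one must verify that, for the \emph{single} contour $\Gamma$ held fixed throughout $D_\delta$, the preimages $F_{(x,y)}^{-1}(\Gamma)$ remain in one compact set bounded away from the branch points uniformly in $(x,y)\in D_\delta$. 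This is precisely where the passage from $D$ to the smaller triangle $D_\delta$ is essential, and it is also the reason the admissible threshold $c_\delta$ must degenerate as $\delta\downarrow 0$.
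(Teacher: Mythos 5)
Your proposal is correct and follows essentially the same route as the paper: the paper also inverts $I-\mathcal{C}_{w(x,y,\cdot)}$ via a Neumann series, reducing everything to the bound $\lVert w(x,y,\cdot)\rVert_{L^\infty(\Gamma)} < \lVert \mathcal{C}_-\rVert_{\mathcal{B}(L^2(\Gamma))}^{-1}$, which it deduces from the Volterra series \eqref{defphi0second} and the estimate \eqref{estimatephi0j} exactly as you do, merely compressing your exponential bound $|\Phi_0-I|\le e^{\lVert \U_0\rVert_{L^1}}-1$ and the uniform control of $\lambda^{\pm1}$ on the fixed contour over $D_\delta$ into the phrase ``it is clear.'' One remark: your final estimate is (correctly) in terms of $H_{0x}/\sqrt{f_0}$ and $H_{1y}/\sqrt{f_1}$, the quantities that actually enter $\U_0$ and $\V_1$, whereas the lemma as printed names $H_0/\sqrt{f_0}$ and $H_1/\sqrt{f_1}$; since the paper's own proof implicitly needs the same derivative quantities, this is a slip in the statement rather than a gap in your argument.
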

{\it Proof of Lemma \ref{lemma: boundary data small}}. The idea is to invert $I-\mathcal{C}_{w(x,y,\cdot)}$ in terms of a Neumann series. It is clear from \eqref{defphi0second} and \eqref{estimatephi0j} and the analogs for $\Phi_1$ that
\begin{align*}
\lVert w(x,y,z) \rVert_{L^\infty(\Gamma)} < \lVert \mathcal{C}_- \rVert_{\mathcal{B}(L^2(\Gamma))}^{-1}
\end{align*}
for sufficiently small $c_\delta >0$, where $\lVert \cdot \rVert_{\mathcal{B}(L^2(\Gamma))}$ denotes the operator norm for bounded linear operators $L^2(\Gamma)\to L^2(\Gamma)$. This implies
\begin{align*}
\lVert \mathcal{C}_w \rVert_{\mathcal{B}(L^2(\Gamma))} \le \lVert \mathcal{C}_- \rVert_{\mathcal{B}(L^2(\Gamma))}\lVert w(x,y,z) \rVert_{L^\infty(\Gamma)} <1,
\end{align*}
and hence $I-\mathcal{C}_{w(x,y,\cdot)}$ is invertible. \proofendcontinue

We proved the following for arbitrary $\delta>0$: If the RH problem \eqref{RHm} has a solution for all $(x,y) \in D_\delta$, then there exists a solution $\{\E,H\}$, given by \eqref{EHdef}, of the Goursat problem for \eqref{ErnstMaxwellEquations} with boundary data $\{ \E_0,H_0,\E_1,H_1 \}$ on the slightly smaller triangle $D_\delta$. In particular, by Lemma \ref{lemma: boundary data small}, this is the case if the $L^1([0,1-\delta))$-norms of $A_0$, $B_1$, $H_0/\sqrt{f_0}$, and $H_1/\sqrt{f_1}$ are sufficiently small. Since $\delta>0$ was arbitrary, this completes the proof of Theorem \ref{mainth2}.
\proofend

\subsection{Proof of Theorem \ref{mainth3}}
Let $\mathcal{E}_0(x)$, $H_0(x)$, $x \in [0, 1)$, and $\mathcal{E}_1(y)$, $H_1(y)$, $y \in [0,1)$, be complex-valued functions satisfying \eqref{AssumptionsBoundary} for some $n \geq 2$. Suppose $\{ \mathcal{E}(x,y), H(x,y) \}$ is a $C^n$-solution of the Goursat problem for \eqref{ErnstMaxwellEquations} in $D$ with data $\{\mathcal{E}_0, H_0, \mathcal{E}_1,H_1\}$. We denote
$$m_1 = \lim_{x \downarrow 0}  x^\alpha \mathcal{E}_{0x}(x), \qquad
m_2 = \lim_{y \downarrow 0}  y^\alpha \mathcal{E}_{1y}(y), \quad n_1 =\lim_{x \downarrow 0}  x^\alpha H_{0x}(x), \quad n_2 = \lim_{y \downarrow 0}  y^\alpha H_{1y}(y),$$
and we want to compute the limits
\begin{align*}
& \lim_{x \downarrow 0} x^\alpha \mathcal{E}_x(x,y), \quad \lim_{y \downarrow 0} y^\alpha \mathcal{E}_y(x,y), \quad \lim_{x \downarrow 0} x^\alpha H_x(x,y), \quad \lim_{y \downarrow 0} y^\alpha H_y(x,y), \quad \text{$(x,y) \in D$},
\end{align*}
at the boundary of $D$. The idea is to use the representation of $m$ given in \eqref{mviacauchyop} together with the differentiated version of the representation formula \eqref{EHrecover}. We already computed the value of $m(x,y,0)$ in \eqref{Formula m(x,y,0)}. Hence it remains to compute $\lim_{x \downarrow 0} x^\alpha m_x(x,y,0)$ and $\lim_{y \downarrow 0} y^\alpha m_y(x,y,0)$. We will only consider the first limit since the second one is analogous.

It is clear from the definition of $w$ that
	\begin{align}\label{w(0)}
w(0,y, z) = \begin{cases}
0, \quad & z \in \Gamma_0, 
\\
\Phi_1\big(y, F_{(0,y)}^{-1}(z)\big) - I, & z \in \Gamma_1,
\end{cases} \quad y \in [0,1).
\end{align}
Noting that $m(x,y,0)=m_0(x,y)$ in \eqref{def of m_0}, the identity $\mu(x,y,z)= m_-(x,y,z)$ for $(x,y) \in D$ and $z \in\Gamma$ implies
\begin{align}\label{mu(0)}
\mu(0,y,z) = \begin{cases}
\Phi_1\big(y,\infty^+\big)^{-1}  \Phi_1\big(y,F_{(0,y)}^{-1}(z)\big), \quad & z \in \Gamma_0, 
\\
\Phi_1\big(y,\infty^+\big)^{-1} , & z \in \Gamma_1,
\end{cases}\quad y \in [0,1).
\end{align}
Furthermore, it follows from the definition of $w$ that $\lim_{x \downarrow 0} x^\alpha w_x(x,y,z)=0$ for $z \in \Gamma_1$ and
\begin{align*}
\lim_{x \downarrow 0} x^\alpha w_x(x,y,z)= \lim_{x \downarrow 0} x^\alpha \Phi_{0x}(x,F_{(x,y)}^{-1}(z)) = \lim_{x \downarrow 0} x^\alpha \U_0(x,F_{(x,y)}^{-1}(z))
\end{align*}
for $z \in \Gamma_0$. This yields
\begin{align}\label{xalphawx}
&\lim_{x \to 0} x^\alpha w_x(x,y, z) \nonumber \\
&= \begin{cases}
\frac{1}{2} 
\begin{pmatrix} \overline{m_1} & \overline{m_1} \lambda(0,0, F_{(0,y)}^{-1}(z)) & 2in_1 \\
m_1 \lambda(0,0, F_{(0,y)}^{-1}(z))  & m_1 & -2i\lambda(0,0, F_{(0,y)}^{-1}(z)) n_1 \\
2i\overline n_1 & 2i\lambda(0,0, F_{(0,y)}^{-1}(z)) \bar n_1 & \frac{1}{2} (m_1+\overline
m_1)
\end{pmatrix}, \; & z \in \Gamma_0, 
\\
0, & z \in \Gamma_1.
\end{cases}
\end{align}
Next we show that
\begin{align}\label{xalphamux}
&\lim_{x \downarrow 0} x^\alpha \mu_x(x,y,z) \nonumber
\\
&= 
- \frac{1}{\sqrt{1-y}}\frac{\Phi_1\big(y, \infty^+\big)^{-1}}{z+1}
\Phi_1\big(y, 0\big)\begin{pmatrix} 0 & \bar{m}_1 & 0 \\
m_1& 0 & -2in_1 \\ 0 & 2i \bar n_1 & 0 \end{pmatrix}\Phi_1\big(y, 0\big)^{-1}=: \Pi(y,z), \quad  z \in \Gamma_1.
\end{align}
Differentiating the identity $\mu = I + \mathcal{C}_w \mu$ gives $\mu_x=(I-\mathcal{C}_w)^{-1} \mathcal{C}_-(\mu w_x)$. Hence, in order to prove \eqref{xalphamux}, it suffices to show that $\mathcal{C}_-(\lim_{x \downarrow 0}x^\alpha \mu w_x)(z)=(I-\mathcal{C}_{w(0,y,\cdot)}) \Pi(z)$. Using \eqref{mu(0)} and \eqref{xalphawx}, we obtain for $z \in \Gamma_1$
\begin{align*}\nonumber
&\Big(\mathcal{C}_- \big[\lim_{x \to 0} x^\alpha  \mu(x,y,\cdot) w_x(x,y,\cdot)\big]\Big)(z)
= \frac{\Phi_1\big(y,\infty^+\big)^{-1} }{2\pi i} 
\\\nonumber
& \times \int_{\Gamma_0} \frac{\Phi_1\big(y,F_{(0,y)}^{-1}(z')\big)}{z' -z}
	\frac{1}{2} \Bigg(\begin{smallmatrix} \overline{m_1} & \overline{m_1} \lambda(0,0, F_{(0,y)}^{-1}(z)) & 2in_1 \\
	m_1 \lambda(0,0, F_{(0,y)}^{-1}(z))  & m_1 & -2i\lambda(0,0, F_{(0,y)}^{-1}(z)) n_1 \\
	2i\overline n_1 & 2i\lambda(0,0, F_{(0,y)}^{-1}(z)) \bar n_1 & \frac{1}{2} (m_1+\overline
	m_1)
	\end{smallmatrix}\Bigg) dz'
\\ \label{Cmuxalphawx}
& = 
- \Phi_1\big(y,\infty^+\big)^{-1} \underset{z' = -1}{\res} 
\frac{\Phi_1(y,F_{(0,y)}^{-1}(z'))  \lambda(0,0, F_{(0,y)}^{-1}(z'))\left(\begin{smallmatrix} 0 & \bar{m}_1 & 0 \\
	m_1& 0 & -2in_1 \\ 0 & 2i \bar n_1 & 0 \end{smallmatrix}\right)}{2(z' -z)} =: \tilde{\Pi}(y,z).
\end{align*}
We observe that $\Phi_1(y,F_{(0,y)}^{-1}(-1))=\Phi_1(y,0)$ and
\begin{align*}
\lambda(0,0, F_{(0,y)}^{-1}(z))= \sqrt{\frac{1}{(z+1)^2}\bigg(\frac{(z-1)^2 -y(z+1)^2}{1-y} \bigg)},
\end{align*}
where the square root has positive (negative) real part for $|z|>1$ ($|z|<1$). Hence we have
\begin{align*}
\lambda(0,0, F_{(0,y)}^{-1}(z))=\frac{1}{z+1} \sqrt{\frac{(z-1)^2 -y(z+1)^2}{1-y} },
\end{align*}
where the square root has negative values for $z<0$ (and in particular for $z=-1$). This yields
\begin{align*}
\tilde{\Pi}(y,z)=-\frac{\Phi_1\big(y,\infty^+\big)^{-1} \Phi_1(y,0)  \left(\begin{smallmatrix} 0 & \bar{m}_1 & 0 \\
	m_1& 0 & -2in_1 \\ 0 & 2i \bar n_1 & 0 \end{smallmatrix}\right) }{(z+1)\sqrt{1-y}}.
\end{align*}
Similarly, by deforming the contour $\Gamma_1$ to infinity, we obtain
\begin{align*}
& (\mathcal{C}_{w(0,y,\cdot)} \Pi)(z)
= \frac{1}{2\pi i} \int_{\Gamma_1}
\frac{\Pi(y, z')\big(\Phi_1\big(y, F_{(0,y)}^{-1}(z')\big) - I\big)}{z' - z_-} dz'
\\
& = -\frac{\Phi_1\big(y, \infty^+\big)^{-1}\Phi_1\big(y, 0\big)\left(\begin{smallmatrix} 0 & \bar{m}_1 & 0 \\
	m_1& 0 & -2in_1 \\ 0 & 2i \bar n_1 & 0 \end{smallmatrix}\right)\Phi_1\big(y, 0\big)^{-1}}{\sqrt{1-y}} 
\underset{z' = -1}{\res}\frac{\Phi_1\big(y, F_{(0,y)}^{-1}(z')\big) - I}{z' - z} \frac{1}{z'+1}
\\
&= \frac{\Phi_1\big(y, \infty^+\big)^{-1}\Phi_1\big(y, 0\big) \left(\begin{smallmatrix} 0 & \bar{m}_1 & 0 \\
	m_1& 0 & -2in_1 \\ 0 & 2i \bar n_1 & 0 \end{smallmatrix}\right)\Phi_1\big(y, 0\big)^{-1}}{\sqrt{1-y}} \frac{\Phi_1(y, 0) - I}{z+1}.
\end{align*}
Now a straightforward computation shows $\tilde{\Pi}(z)=(I-\mathcal{C}_{w(0,y,\cdot)}) \Pi(z)$ and hence \eqref{xalphamux} is proven.

After substituting the identities \eqref{w(0)}, \eqref{mu(0)}, \eqref{xalphawx}, and \eqref{xalphamux} into the equation
\[
m_x =  \mathcal{C}(\mu_x w) + \mathcal{C}(\mu w_x),
\] 
 similar residue calculations as the ones for proving \eqref{xalphamux} show
\begin{align}\nonumber
\lim_{x \to 0} x^\alpha m_x(x,y,0) 
= & - \frac{1}{\sqrt{1-y}}\Phi_1\big(y, \infty^+\big)^{-1}
\Phi_1\big(y, 0\big)\begin{pmatrix} 0 & \bar{m}_1 & 0 \\
m_1& 0 & -2in_1 \\ 0 & 2i \bar n_1 & 0 \end{pmatrix}
\\\label{formulaxalphamx}
&\times \Phi_1\big(y, 0\big)^{-1}\Lambda \Phi_1(y,\infty^+) \Lambda,
\end{align}
where we also used the symmetry \eqref{SymmetriesPhi0Phi1b}. It remains to compute the value of $\Phi_1(y,0)$.

\begin{lemma} \label{lemmaPhi10}For $y \in [0,1)$, it holds that
	\begin{align}\label{formulaPhi10}
	\Phi_1(y, 0) = \begin{pmatrix} a_1(y) & 0 & b_1(y) \\ 0 & c_1(y) & 0 \\ d_1(y) & 0 & e_1(y) \end{pmatrix},
	\end{align}
	for some functions $a_1,b_1,c_1,d_1,e_1$ satisfying 
\begin{subequations}\label{symmetriesPhi1(0,y)}
	\begin{align}
	e_1&=\overline{a_1}\sqrt{f_1}/c_1, &d_1&=-\overline{b_1}\sqrt{f_1}/c_1, &a_1e_1-b_1d_1&=f_1^{3/2    } /c_1,
	\\
	c_1&= f_1/\bar{c_1}, &	c_1(y) &= e^{\int_0^y B_1(y')dy'}.&&
	\end{align}
\end{subequations}
\end{lemma}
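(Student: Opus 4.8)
The plan is to exploit the degeneration of the coefficient matrix $\V_1$ at the branch point $k=0$. First I would note that $k=0$ is a branch point of $\lambda(0,y,\cdot)$, so that $\lambda(0,y,0)=\infty$ and therefore $1/\lambda(0,y,0)=0$. Substituting $1/\lambda=0$ into the definition \eqref{DefUV} of $\V_1$ annihilates every entry containing a factor $1/\lambda$, leaving
\[
\V_1(y,0)=\begin{pmatrix}
\overline{B_1(y)} & 0 & \frac{i}{\sqrt{f_1(y)}}H_{1y}(y) \\
0 & B_1(y) & 0 \\
\frac{i}{\sqrt{f_1(y)}}\overline{H_{1y}(y)} & 0 & \frac{1}{2}(B_1(y)+\overline{B_1(y)})
\end{pmatrix}.
\]
This matrix is block-diagonal with respect to the splitting of indices into $\{1,3\}$ and $\{2\}$: its second row and column decouple from the remaining $2\times2$ corner. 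Since the set of matrices of the shape \eqref{formulaPhi10} is closed under multiplication and contains $I$, and $\V_1(y',0)$ belongs to it for every $y'$, the successive-approximation solution of the Volterra equation \eqref{phi0phi1defb} stays in this set. Hence $\Phi_1(y,0)$ has the claimed form \eqref{formulaPhi10}.

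Next I would isolate the decoupled $(2,2)$-component. Because both $\V_1(y',0)$ and $\Phi_1(y',0)$ are block-diagonal, the $(2,2)$-entry of \eqref{phi0phi1defb} reads $c_1(y)=1+\int_0^y B_1(y')c_1(y')\,dy'$, that is, $c_1'=B_1c_1$ with $c_1(0)=1$; its unique solution is $c_1(y)=e^{\int_0^y B_1(y')\,dy'}$, which is the last identity in \eqref{symmetriesPhi1(0,y)}.

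The remaining relations I would obtain from the determinant identity together with the conjugation symmetry. Expanding $\det\Phi_1(y,0)=c_1(a_1e_1-b_1d_1)$ and comparing with \eqref{detPhi0Phi1} gives $a_1e_1-b_1d_1=f_1^{3/2}/c_1=:\Delta$. Since $k=0$ is real, the second symmetry in \eqref{SymmetriesPhi0Phi1b} applies (as a boundary value) and reads $\overline{\Phi_1(y,0)}=f_1\Lambda(\Phi_1(y,0)^{-1})^t\Lambda$; because $\Lambda A\Lambda=A$ for every $A$ of the form \eqref{formulaPhi10}, this reduces to $\overline{\Phi_1(y,0)}=f_1(\Phi_1(y,0)^{-1})^t$. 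Writing out the block inverse $\Phi_1(y,0)^{-1}$, whose corner block is $\Delta^{-1}\bigl(\begin{smallmatrix} e_1 & -b_1 \\ -d_1 & a_1\end{smallmatrix}\bigr)$ and whose $(2,2)$-entry is $1/c_1$, and matching entries (using $f_1/\Delta=c_1/\sqrt{f_1}$), I would read off $\overline{a_1}=e_1c_1/\sqrt{f_1}$, $\overline{b_1}=-d_1c_1/\sqrt{f_1}$, and $\overline{c_1}=f_1/c_1$. These rearrange exactly to $e_1=\overline{a_1}\sqrt{f_1}/c_1$, $d_1=-\overline{b_1}\sqrt{f_1}/c_1$, and $c_1=f_1/\overline{c_1}$, completing \eqref{symmetriesPhi1(0,y)}.

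The one point requiring genuine care, rather than routine computation, is the legitimacy of working at the branch point $k=0$: one must justify that $\Phi_1(y,0)$ is well defined and that the symmetry \eqref{SymmetriesPhi0Phi1b} and the determinant formula \eqref{detPhi0Phi1} persist there. I would handle this by taking the continuous boundary value of $k\mapsto\Phi_1(y,k^\pm)$ as $k\to 0$, which exists by the regularity recorded in Lemma \ref{lemma: Propertiesphi0ph1}; the two sheets agree in the limit because $1/\lambda(0,y,k^\pm)\to 0$ regardless of the sign choice, so the substitution $1/\lambda=0$ is unambiguous and all the stated identities extend by continuity.
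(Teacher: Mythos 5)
Your proof is correct and takes essentially the same route as the paper: the paper's own (very terse) proof likewise rests on $\lambda(0,y,0)=\infty$, which gives $\V_1(y,0)$ its block structure, then solves $\Phi_{1y}(y,0)=\V_1(y,0)\Phi_1(y,0)$ with $\Phi_1(0,0)=I$ to obtain the form \eqref{formulaPhi10} and the formula for $c_1$, and finally invokes the determinant identity \eqref{detPhi0Phi1} and the symmetry \eqref{SymmetriesPhi0Phi1b} for the remaining relations. Your additional care at the branch point $k=0$ (taking continuous boundary values, noting both sheets agree since $1/\lambda\to 0$) simply makes explicit a step the paper leaves implicit.
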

{\it Proof of Lemma \ref{lemmaPhi10}.}
This is a consequence of equation \eqref{detPhi0Phi1},  the symmetries \eqref{SymmetriesPhi0Phi1b}, and solving the equation 
\[
\Phi_{1y}(y,0)=\V_1(y,0) \Phi_1(y,0)
\]
 using the fact that $\lambda(0,y,0)=\infty$. \proofendcontinue
 
Now differentiating \eqref{EHrecover} and using \eqref{Phi0Phi1atinfinityb}, \eqref{Formula m(x,y,0)}, and \eqref{formulaxalphamx} gives the identities \eqref{boundaryconditionsEH1} and \eqref{boundaryconditionsEH3} under consideration of the symmetries \eqref{symmetriesPhi1(0,y)} (after a long but straightforward calculation). The other identities in \eqref{boundaryconditionsEH} can be shown in a similar way.

The identity \eqref{boundaryrelevantforwavesa} follows from \eqref{boundaryconditionsEH} and the symmetries \eqref{symmetriesPhi1(0,y)}. The proof of \eqref{boundaryrelevantforwavesb} is analogous. This completes the proof of Theorem \ref{mainth3}. \proofend

\section{Examples} \label{examplessec}
In this section, following \cite{Griffiths1991}, we will consider a simple class of solutions of the Goursat problem for \eqref{ErnstMaxwellEquations} and compute their boundary values \eqref{boundaryconditionsEH}.
Let
\begin{align} \label{tzcoordinates}
t=\sqrt{x}\sqrt{1-y} + \sqrt{y}\sqrt{1-x}, \qquad z=\sqrt{y}\sqrt{1-x} - \sqrt{x}\sqrt{1-y}.
\end{align}
Then a family of smooth solutions  of the Goursat problem for \eqref{ErnstMaxwellEquations} in $D$ with $\alpha=1/2$ is given by the potentials
\[
\mathcal{E} \equiv 1 , \qquad H=pt+iqz,
\]
where $p,q$ are real numbers such that $p^2+q^2=1$. For these solutions, we compute
\[
m_1=0=m_2, \qquad  n_1 = \frac{1}{2} (p-iq)=\bar{n}_2,
\]
and an easy computation yields
\begin{align*}
&(1-y)	\lim_{x\to 0} x \left( \frac{|\mathcal{E}_x(x,y)-2\bar{H}_1(y)H_x(x,y)|^2}{f_1(y)^2} + \frac{4 |H_x(x,y)|^2}{f_1(y)} \right)=1=|m_1|^2+4|n_1|^2,
\\
&(1-x)	\lim_{y\to 0} y  \left( \frac{|\mathcal{E}_y(x,y)-2\bar{H}_0(x)H_y(x,y)|^2}{f_0(x)^2} + \frac{4 |H_y(x,y)|^2}{f_0(x)} \right)=1=|m_2|^2+4|n_2|^2.
\end{align*}
Note that this family of solutions can be seen as a generalization of the famous Nutku--Halil solution \cite{NH1977} of the respective Goursat problem for the hyperbolic version of Ernst's equation \cite{E1968,Griffiths1991}, since the potentials
\[
\frac{1+H}{1-H}
\]
define the Nutku--Halil solutions of the hyperbolic version of Ernst's equation. The case of $p=1$ and $q=0$ is the Bell--Szekeres solution, which was first described in \cite{BS1974}.

\appendix
\renewcommand{\theequation}{A.\arabic{equation}}\nequation
\section{Colliding electromagnetic plane waves} \label{Aapp}
Let
\[
\mathcal{E}-|H|^2 = f+ i\phi,
\]
where $2\phi=i(\bar{\mathcal{E}}-\mathcal{E})$. We define a potential $\chi$ by
\[
\chi = (1-x-y)/f
\]
and a potential $\omega$ by the equations
\begin{align*}
	\omega_t&= \frac{1-z^2}{f^2}\big(\phi_z-i(H\bar{H}_z-\bar{H}H_z)\big), \\
	\omega_z&=\frac{1-t^2}{f^2}\big(\phi_t-i(H\bar{H}_t-\bar{H}H_t)\big),\\
\end{align*}
where we used the coordinate transformation \eqref{tzcoordinates}.

The Ernst--Maxwell equations can be written as \cite{Griffiths1990,Griffiths1991}
\begin{align} \label{ernstmaxwellbeforetrafo}
\begin{cases}
	2(\chi_{uv}+i\omega_{uv}) = U_u(\chi_v + i\omega_v)+U_v(\chi_u+ i\omega_u) 
	\\
	\quad +2\chi^{-1}(\chi_u+i\omega_u)(\chi_v+i\omega_v)+e^U 4\chi^2   H_v \bar{H}_u,
	\\
	0 =2\chi H_{uv}+(\chi_v +i\omega_v)H_u+(\chi_u-i\omega_u)H_v ,
	\end{cases}
\end{align}
where $e^{-U(u,v)}=1-x(v)-y(u)$ for some monotonically increasing functions $x(v)$ and $y(u)$. It is possible \cite{Szekeres1972} to use $x,y$ as coordinates which transforms \eqref{ernstmaxwellbeforetrafo} into \eqref{ErnstMaxwellEquations}. Here the second equation of \eqref{boundaryrelevantforwaves} directly becomes the second equation of \eqref{ErnstMaxwellEquations}. The first equation in \eqref{ernstmaxwellbeforetrafo} reduces to the first equation in \eqref{ErnstMaxwellEquations} after using the already observed second equation of \eqref{ErnstMaxwellEquations}.

In order to be relevant for colliding electromagnetic plane waves, a solution $\{ \mathcal{E},H \}$ of the Goursat problem for \eqref{ErnstMaxwellEquations} in $D$ must satisfy the boundary conditions (cf. Eq. (25) in \cite{Griffiths1990}; Note that there is a factor $1-x-y=f+g$ missing that comes from Eq. (22) and (23))
\begin{align*}
&(1-y)	\lim_{x\to 0} x \left( \frac{\chi_x^2+w_x^2}{\chi^2} + \frac{4 \chi}{1-x-y} H_x\bar{H}_x \right)=2k_1,
\\
&(1-x)	\lim_{y\to 0} y \left( \frac{\chi_y^2+w_y^2}{\chi^2} + \frac{4 \chi}{1-x-y} H_y\bar{H}_y \right)=2k_2,
\end{align*} 
for some constants $k_1,k_2\in [1/2,1)$. This is due to the fact that the resulting space time should be at least $C^1$. Recalling the definitions of $\chi$ and $\omega$, these boundary conditions become
\begin{align*}
&(1-y)	\lim_{x\to 0} x \left( \frac{|\mathcal{E}_x(x,y)-2\bar{H}_1(y)H_x(x,y)|^2}{f_1(y)^2} + \frac{4 |H_x(x,y)|^2}{f_1(y)} \right)=2k_1,
\\
&(1-x)	\lim_{y\to 0} y  \left( \frac{|\mathcal{E}_y(x,y)-2\bar{H}_0(x)H_y(x,y)|^2}{f_0(x)^2} + \frac{4 |H_y(x,y)|^2}{f_0(x)} \right)=2k_2.
\end{align*}
By \eqref{boundaryrelevantforwaves}, the limits above are indeed constant and only depend on the given boundary data. In particular, it is sufficient to check that
\[
| m_1|^2 + 4 |n_1|^2, | m_2|^2 + 4 |n_2|^2 \in [1,2).
\]

\bigskip
\noindent
{\bf Acknowledgement} {\it Support is acknowledged from the European Research Council, Grant Agreement No.~682537.}

\bibliographystyle{plain}

\begin{thebibliography}{99}
	\small
	
	\bibitem{BS1974} P. Bell and P. Szekeres, Interacting electromagnetic shock waves in general relativity, { \it Gen. Rel. Grav.} {\bf 5} (1974), 275--286.
	
	\bibitem{BFS2006}
	A. Boutet de Monvel, A. S. Fokas, and D. Shepelsky, Integrable nonlinear evolution equations on a finite interval,
	{\it Comm. Math. Phys.} {\bf 263} (2006), 133--172.
	
	\bibitem{BdMS2003} A. Boutet de Monvel and D. Shepelsky, The mKdV equation on the finite interval, {\it C. R. Acad. Sci. Paris Ser. I Math.} {\bf 337} (2003), 517--522.
	
	\bibitem{CX1985}
	S. Chandrasekhar and B. C. Xanthopoulus, On colliding waves in the Einstein--Maxwell theory, {\it Proc. Roy. Soc. Lond. A} {\bf 398} (1985), 223--259.
	
	 \bibitem{D1999}
	P. Deift, {\it Orthogonal polynomials and random matrices: a Riemann--Hilbert approach}, Courant Lecture Notes in Mathematics, 3, New York University, Courant Institute of Mathematical Sciences, New York; American Mathematical Society, Providence, RI, 1999.
	
	\bibitem{DTV2014}
	B. Deconinck, T. Trogdon, and V. Vasan, 
	The method of Fokas for solving linear partial differential equations, {\it SIAM Rev.} {\bf 56} (2014), 159--186. 
	
	\bibitem{E1968}
	F. J. Ernst, New formulation of the axially symmetric gravitational field problem, {\it Phys. Rev.} {\bf 167} (1968), 1175--1178.
	
	\bibitem{E1968b}
	F. J. Ernst, New formulation of the axially symmetric gravitational field problem. II, {\it Phys. Rev.} {\bf 168} (1968), 1415--1417.
	
	\bibitem{F1997} A. S. Fokas, A unified transform method for solving linear and certain nonlinear PDEs, {\it Proc. Roy. Soc. Lond. A} {\bf 453} (1997), 1411--1443.
	
	\bibitem{FIS2005} A. S. Fokas, A. R. Its, and L.-Y. Sung, The nonlinear Schr\"odinger equation on the half-line, {\it Nonlinearity} {\bf 18} (2005), 1771--1822.
	
	\bibitem{FST1999} A. S. Fokas, L.-Y. Sung, and D. Tsoubelis, The inverse spectral method for colliding gravitational waves, 	{\it Math. Phys. Anal. Geom.} {\bf 1} (1999), 313--330.
	
	\bibitem{Griffiths1990} J. B. Griffiths, Colliding plane gravito-electromagnetic waves, {\it Int. J. Theor. Phys.} { \bf 29} (1990), 173--181.
	
	\bibitem{Griffiths1991} J. B. Griffiths, {\it Colliding plane waves in general relativity}, Oxford Mathematical Monographs. Oxford Science Publications. The Clarendon Press, Oxford University Press, New York, 1991.
	
	\bibitem{H1990} M. Halilsoy, Large family of colliding waves in the Einstein--Maxwell theory, {\it J. Math. Phys.} {\bf 31} (1990), 2694--2698.
	
	\bibitem{L2011} J. Lenells, Boundary value problems for the stationary axisymmetric Einstein equations: a disk rotating around a black hole, {\it Comm. Math. Phys.} {\bf 304} (2011), 585--635.

	
	\bibitem{L2018}
	J. Lenells, Matrix Riemann--Hilbert problems with jumps across Carleson contours, {\it Monatsh. Math.} {\bf 186} (2018), 111--152.
	
	\bibitem{LF2011} J. Lenells and A. S. Fokas, Boundary-value problems for the stationary axisymmetric Einstein equations: a rotating disc, {\it Nonlinearity} {\bf 24} (2011), 177--206.
	
	\bibitem{LMernst}
	J. Lenells and J. Mauersberger, The hyperbolic Ernst equation in a triangular domain, preprint, arXiv:1808.08365.
	
	\bibitem{LP2018} J. Lenells and L. Pei, Exact solution of a Neumann boundary value problem for the stationary axisymmetric Einstein equations, {\it J. Nonlinear Sci.} \textbf{29} (2019), 1621--1657.
	
	\bibitem{NH1977}
	Y. Nutku and M. Halil, Colliding impulsive gravitational waves, {\it Phys. Rev. Lett.} {\bf 39} (1977), 1379--1382.
	
	\bibitem{PP2010}
	B. Pelloni and D. A. Pinotsis, The elliptic sine-Gordon equation in a half plane, 
	{\it Nonlinearity} {\bf 23} (2010), 77--88.
	
	\bibitem{Szekeres1972}  P. Szekeres, Colliding plane gravitational waves, {\it J. Math. Phys.} {\bf13} (1972), 286--294.
	
\end{thebibliography}

\end{document}